\newtheorem{theorem}{Theorem}[section]
\newtheorem{lemma}[theorem]{Lemma}
\newtheorem{proposition}[theorem]{Proposition}
\theoremstyle{definition}
\newtheorem{definition}[theorem]{Definition}
\newtheorem{example}[theorem]{Example}
\theoremstyle{remark}
\newtheorem{remark}[theorem]{Remark}
\numberwithin{equation}{section}
\title{On metric viscosity solutions for Hamilton-Jacobi equations of evolution type}
\author{Atsushi~Nakayasu$^{\mathrm{a}}$}
\date{}
\begin{document}

\maketitle

\begin{center}
$^{\mathrm{a}}$
\textit{Graduate School of Mathematical Sciences, University of Tokyo \\
3-8-1 Komaba, Meguro-ku, Tokyo, 153-8914 Japan,}
\texttt{ankys@ms.u-tokyo.ac.jp}
\end{center}

\begin{abstract}
This paper studies Hamilton-Jacobi equations of evolution type defined in a general metric space.
We give a notion of a solution through optimal principles and establish a unique existence theorem of the solution for initial value problems.
We also note a relationship between the notion of a solution and another notion based on characterization of the modulus of the gradient in the sense of \cite{Giga2012}.
\end{abstract}

\textbf{Mathematics Subject Classification:}
35D40, 35F21, 35F25, 49L20, 49L25

\section{Introduction}
\label{s41}

This paper studies Hamilton-Jacobi equations of evolution type defined in a general metric space $(\mathcal{X}, d)$.
One of the most simple problem is the fully nonlinear equation of the form
\begin{equation}
\label{e412}
u_{t}+|\mathit{D}u| = f(x) \quad \text{in $\mathcal{X}\times(0, T)$}
\end{equation}
for a given bounded continuous function $f$ and an unknown function $u = u(x, t)$ on $\mathcal{X}\times(0, T)$ with $T > 0$;
let $u_{t}$ denote the derivative with respect to the time variable $t$
and $|\mathit{D}u|$ formally denote the modulus of gradient in the space variable $x$ with $\mathbf{R}_{+} = [0, \infty)$ values in the sense of \cite{Giga2012}.
That is,
\begin{equation}
\label{e413}
|\mathit{D}u|(x, t) := \sup_{\xi \in \mathit{Lip}^{1}_{x}(\mathcal{X})}\left\{|w_{s}(0)| \mid \text{$w(s, t) = u(\xi(s), t)$}\right\},
\end{equation}
where $\mathit{Lip}^{1}_{x}(\mathcal{X})$ is the set of all absolutely continuous curves $\xi : \mathbf{R} \to \mathcal{X}$ satisfying
$$
|\xi'|(t) := \lim_{s \to t}\frac{d(\xi(s), \xi(t))}{|s-t|} \leq 1 \quad \text{a.e.\ $t$}
$$
and $\xi(0) = x$.
Note that the metric derivative $|\xi'|$ is defined for an absolutely continuous curve $\xi$ in a metric space; see \cite[Chapter 1]{Ambrosio2008}.
However, since a metric space has no tangent space structure in general, the quantity corresponding to the derivative $\xi'$ and the gradient $\mathit{D}u$ is not well-defined.

Hamilton-Jacobi equations are fundamental in various fields of mathematics and physics
and there are many works studying the equation including \eqref{e411}.
We point out that the theory of viscosity solutions is successful for Hamilton-Jacobi equations defined on a Euclidean space,
which is introduced by Crandall and Lions \cite{Lions1982}, \cite{Crandall1983}.
This theory is extended to Banach spaces \cite{Crandall1985}, \cite{Crandall1986}, \cite{Crandall1986a}, \cite{Crandall1990};
this extension is expected to be useful for discussing an optimal control problem with respect to partial differential equations;
we refer the reader to \cite{Feng2009} and itself studies a resolvent problem of Hamilton-Jacobi equations in generalized spaces.
Hamilton-Jacobi equations also appear in the optimal transport theory \cite[Chapter 7, 22, 30]{Villani2009}
and the equation in Wasserstein spaces is studied by Gangbo, Nguyen and Tudorascu \cite{Gangbo2008}.
We also note that study on Hamilton-Jacobi equations on a space with junctions such as a network helps considering the LWR model of traffic flows \cite{Lighthill1955}, \cite{Richards1956}.
Such a problem on a space with junctions is studied in \cite{Imbert2011}, \cite{Imbert2013}.

In order to handle Hamilton-Jacobi equations in such generalized spaces, Giga, Hamamuki and Nakayasu introduced a notion of a viscosity solution of Eikonal equation in \cite{Giga2012}.
We study time evolution equations in the present paper.

We establish a unique existence theorem for an initial value problem of the Hamilton-Jacobi equation.
Consider the value function of an optimal control problem
$$
U(x, t) = \sup_{\xi \in \mathit{Lip}^{1}_{x}(\mathcal{X})}\left\{\int_{0}^{t}f(\xi(r))\mathit{d}r+u_{0}(\xi(t))\right\},
$$
where $u_{0}$ is an bounded uniformly continuous function.
Then, this value function $U$ formally solves the Hamilton-Jacobi equation \eqref{e412} with an initial condition $U(x, 0) = u_{0}(x)$.
It is remarkable that the value function $U$ satisfies
$$
U(x, t) = \sup_{\xi \in \mathit{Lip}^{1}_{x}(\mathcal{X})}\left\{\int_{0}^{h}f(\xi(r))\mathit{d}r+U(\xi(h), t-h)\right\},
$$
which is called a dynamic programming principle; see, e.g.\ \cite{Bardi1997}.

We define a notion of a subsolution and a supersolution based on the dynamic programming principle:
A function $u$ is a subsolution if $w(h) := u(\xi(h), t-h)$ satisfies
$$
-w'(h) \leq f(\xi(h))
$$
in the viscosity sense for all curves $\xi$ called admissible
while $u$ is a supersolution if there exists an  and   such that
$$
-w'(h) \geq f(\xi(h))
$$
in the viscosity sense for some admissible curve $\xi$.
Then, we have the unique existence theorem easily.
However, it is not clear how this definition relates to the original equation \eqref{e412}.
We show that our subsolution is equivalent to a subsolution of \eqref{e412} in the sense of \cite{Giga2012}.
However, it seems to be difficult to show the similar statement for a supersolution.

We point out that closely related topics have studied by Ambrosio and Feng \cite{Ambrosio2013} and of Gangbo and Swiech \cite{Gangbo2014a}, \cite{Gangbo2014b}.
They study the Hamilton-Jacobi equations including \eqref{e411} in a complete geodesic metric space.
However, our theory is applicable to any spaces with metric structure.

This paper is organized as follows.
In Section \ref{s42} we prepare to handle generalized Hamilton-Jacobi equations in a metric space and define a class of admissible curves and a sub- and supersolution based on the dynamic programming.
In Section \ref{s43} we prove some equivalent conditions for a subsolution and a supersolution.
The unique existence theorem will be shown in Section \ref{s44}.

\section{Preliminary and definition of solutions}
\label{s42}

Consider the Hamilton-Jacobi equation of the form
\begin{equation}
\label{e411}
u_{t}+H(x, |\mathit{D}u|) = 0 \quad \text{in $\mathcal{X}\times(0, T)$}
\end{equation}
with a function $H = H(x, p) : \mathcal{X}\times\mathbf{R}_{+} \to \mathbf{R}$ satisfying:
\begin{itemize}
\item[(A1)]
$H = H(x, p)$ is a continuous function in $\mathcal{X}\times\mathbf{R}_{+}$.
\item[(A2)]
$H$ is convex and nondecreasing with respect to the variable $p$ for each $x \in \mathcal{X}$.
\end{itemize}
Define the function
$$
L(x, v) = \sup_{p \in \mathbf{R}_{+}}(p v-H(x, p)) \in \mathbf{R}\cup\{\infty\}
\quad \text{for $(x, v) \in \mathcal{X}\times\mathbf{R}_{+}$.}
$$

We then see that
\begin{proposition}
\label{t421}
Assume (A1) and (A2).
Then, the function $L = L(x, v)$ is lower semicontinuous in $\mathcal{X}\times\mathbf{R}_{+}$ and it is also convex and nondecreasing with respect to the variable $v$ for each $x \in \mathcal{X}$.
In addition, the equations
\begin{align}
\label{e421}
H(x, p) = \sup_{v \in \mathbf{R}_{+}}(p v-L(x, v)) &\quad \text{for all $(x, p) \in \mathcal{X}\times\mathbf{R}_{+}$,} \\
\label{e422}
H(x, |p|) = \sup_{v \in \mathbf{R}}(p v-L(x, |v|)) &\quad \text{for all $(x, p) \in \mathcal{X}\times\mathbf{R}$}
\end{align}
hold.
\end{proposition}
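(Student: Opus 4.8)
The plan is to obtain the whole statement from standard properties of the Legendre--Fenchel transform, in three steps.

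\textbf{Step 1 (regularity of $L$).} For each fixed $p\in\mathbf{R}_+$ the map $(x,v)\mapsto pv-H(x,p)$ is continuous on $\mathcal{X}\times\mathbf{R}_+$ by (A1), hence lower semicontinuous, while $v\mapsto pv-H(x,p)$ is affine and, since $p\geq 0$, nondecreasing. As a pointwise supremum over $p$ of functions with these properties, $L$ is lower semicontinuous on $\mathcal{X}\times\mathbf{R}_+$ and $L(x,\cdot)$ is convex and nondecreasing for every $x$; the arguments remain valid when $L$ takes the value $+\infty$, with the usual conventions. This step is routine.

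\textbf{Step 2 (the identity \eqref{e421}).} The inequality $H(x,p)\geq\sup_{v\in\mathbf{R}_+}(pv-L(x,v))$ is immediate, because $L(x,v)\geq pv-H(x,p)$ for every $v\in\mathbf{R}_+$ by definition of $L$. For the converse, fix $x$ and $p_{0}\in\mathbf{R}_+$. By (A1) and (A2) the function $p\mapsto H(x,p)$ is real-valued, convex and nondecreasing on $\mathbf{R}_+$, so it admits a supporting line at $p_{0}$: there is $v_{0}$ with $H(x,p)\geq H(x,p_{0})+v_{0}(p-p_{0})$ for all $p\in\mathbf{R}_+$. Since $H(x,\cdot)$ is nondecreasing, $v_{0}$ may be chosen in $\mathbf{R}_+$: at an interior $p_{0}$ the subdifferential is squeezed between the (nonnegative) left and right derivatives of $H(x,\cdot)$, and at $p_{0}=0$ one takes $v_{0}$ to be the right derivative of $H(x,\cdot)$ at $0$, which equals the infimum as $p\downarrow 0$ of the nondecreasing, nonnegative difference quotients $p^{-1}(H(x,p)-H(x,0))$ and hence lies in $[0,\infty)$. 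Rewriting the supporting-line inequality as $p_{0}v_{0}-H(x,p_{0})\geq pv_{0}-H(x,p)$ for all $p\in\mathbf{R}_+$ and taking the supremum over $p$ yields $p_{0}v_{0}-H(x,p_{0})\geq L(x,v_{0})$, that is $p_{0}v_{0}-L(x,v_{0})\geq H(x,p_{0})$, and therefore $\sup_{v\in\mathbf{R}_+}(pv-L(x,v))\geq H(x,p_{0})$.

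\textbf{Step 3 (the identity \eqref{e422}).} Both sides of \eqref{e422} are even in $p$ (replace $v$ by $-v$ in the right-hand side), so it suffices to treat $p\in\mathbf{R}_+$. Splitting the supremum into the ranges $v\geq 0$ and $v\leq 0$ and substituting $v\mapsto-v$ in the latter gives $\sup_{v\in\mathbf{R}}(pv-L(x,|v|))=\max\{\,\sup_{v\in\mathbf{R}_+}(pv-L(x,v)),\ \sup_{v\in\mathbf{R}_+}(-pv-L(x,v))\,\}$. The first term is $H(x,p)$ by \eqref{e421}. In the second term $-p\leq 0$ and $L(x,\cdot)$ is nondecreasing, so the supremum is attained at $v=0$ and equals $-L(x,0)$; moreover $L(x,0)=\sup_{p\in\mathbf{R}_+}(-H(x,p))=-H(x,0)$ since $H(x,\cdot)$ is nondecreasing, so the second term equals $H(x,0)\leq H(x,p)$. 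Hence the maximum is $H(x,p)=H(x,|p|)$, as required. The only point needing care is the existence, in Step 2, of a supporting line with \emph{nonnegative} slope at the endpoint $p=0$; if one prefers to avoid the endpoint discussion, an alternative is to extend $H(x,\cdot)$ by $+\infty$ outside $\mathbf{R}_+$, apply the Fenchel--Moreau theorem to the resulting proper, lower semicontinuous, convex function, and then translate the biconjugation back into a supremum over $\mathbf{R}_+$ using the monotonicity of $H$ and $L$.
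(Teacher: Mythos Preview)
Your proof is correct. Step~1 matches the paper's argument verbatim. The difference lies in the order in which \eqref{e421} and \eqref{e422} are obtained. You prove \eqref{e421} first by finding a supporting line for the convex function $H(x,\cdot)$ on the half-line $\mathbf{R}_{+}$, which forces you to treat the boundary point $p_{0}=0$ separately (via the right derivative); you then deduce \eqref{e422} from \eqref{e421} by an even-symmetry splitting of the supremum. The paper instead proves \eqref{e422} first: it extends $p\mapsto H(x,|p|)$ to all of $\mathbf{R}$, notes that this extension is convex on $\mathbf{R}$ (as the composition of $|\cdot|$ with a convex nondecreasing function), and uses that a real-valued convex function on $\mathbf{R}$ admits a supporting line at every point, with no boundary to worry about; \eqref{e421} then follows from \eqref{e422}. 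The paper's route is a little slicker because the symmetrization eliminates the endpoint case altogether, whereas your route is more direct on the original domain but needs the extra sentence about the right derivative at $0$. Both arguments are equally elementary.
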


\begin{proof}[Proof]
Since (A1) shows that $(x, v) \mapsto p v-H(x, p)$ is continuous for each $p \in \mathbf{R}_{+}$,
we see that the supremum $L$ is lower semicontinuous.
We also see that $v \mapsto L(x, v)$ is convex and nondecreasing
since $v \mapsto p v-H(x, p)$ is affine and nondecreasing for each $(x, p) \in \mathcal{X}\times\mathbf{R}_{+}$.

Show the equation \eqref{e422}.
First we easily see that
$$
L(x, |v|) = \sup_{p \in \mathbf{R}}(p v-H(x, |p|))
$$
and hence
\begin{equation}
\label{e423}
H(x, |p|) \geq p v-L(x, |v|)
\end{equation}
for all $x \in \mathcal{X}$, $p, v \in \mathbf{R}$.
Therefore, it suffices to show that the equality of \eqref{e423} holds for some $v \in \mathbf{R}$.
Note that $p \mapsto H(x, |p|)$ is convex in $\mathbf{R}$ by (A2).
Thus, for each $p \in \mathbf{R}$ there exists $v \in \mathbf{R}$ such that
$$
H(x, |q|) \geq v(q-p)+H(x, |p|)
\quad \text{for all $q \in \mathbf{R}$,}
$$
which implies
$$
p v-H(x, |p|) \geq \sup_{q \in \mathbf{R}}(v q-H(x, |q|)) = L(x, |v|).
$$
We hence have \eqref{e422}.
The equation \eqref{e421} follows from \eqref{e422}.
\end{proof}

We will also assume:
\begin{itemize}
\item[(A3)]
$c_{H}(p) := \sup_{x \in \mathcal{X}}H(x, p) < \infty$ for each $p \in \mathbf{R}_{+}$.
\item[(A4)]
$\inf_{x \in \mathcal{X}}H(x, 0) > -\infty$ and
$$
\liminf_{p \to \infty}\inf_{x \in \mathcal{X}}\frac{H(x, p)}{p} > 0.
$$
\item[(A5)]
$L$ is continuous on $D_{L} := \{(x, v) \in \mathcal{X}\times\mathbf{R}_{+} \mid L(x, v) < \infty\}$.
A map $x \mapsto V_{L}(x) := \sup\{v \in \mathbf{R}_{+} \mid (x, v) \in D_{L}\}$ is lower semicontinuous on $\mathcal{X}$.
\end{itemize}

\begin{remark}
The condition (A3) implies that
\begin{itemize}
\item[(A3)']
$L(x, v) \geq -c_{H}(0) > -\infty$ for all $(x, v) \in \mathcal{X}\times\mathbf{R}_{+}$ and $\ell(v) := \inf_{x \in \mathcal{X}}L(x, v)$ satisfies
\begin{equation}
\label{e4215}
\liminf_{v \to \infty}\frac{\ell(v)}{v} = \infty.
\end{equation}
\end{itemize}
Indeed, by definition we have
$$
L(x, v) \geq p v-H(x, p) \geq p v-c_{H}(p)
\quad \text{for all $p \in \mathbf{R}_{+}$,}
$$
which shows $L(x, v) \geq -c_{H}(0)$ and $\liminf_{v \to \infty}\ell(v)/v \geq p$.

We also see that the condition (A4) implies that
\begin{itemize}
\item[(A4)']
there exists $V_{L} > 0$ such that
$\sup_{x \in \mathcal{X}}L(x, V_{L}) < \infty$
\end{itemize}
if (A2) holds.
Indeed, note that there exist $V > 0$ and $P \in \mathbf{R}_{+}$ such that $H(x, p) \geq p V$ for all $(x, p) \in \mathcal{X}\times[P, \infty)$ and that $H(x, p) \geq H(x, 0) \geq \inf_{x \in \mathcal{X}}H(x, 0) =: C$.
Hence, we see that
$$
\begin{aligned}
L(x, v)
&= \max\{\sup_{p \leq P}(p v-H(x, p)), \sup_{p \geq P}(p v-H(x, p))\} \\
&\leq \max\{(P v-C), \sup_{p \leq P}p(v-V)\},
\end{aligned}
$$
and so
$
L(x, V) \leq P V-C < \infty.
$

We point out that the assumptions (A3) and (A4) can be replaced by (A3)' and (A4)'
since we need only (A3)' and (A4)' on the main part of this paper.
\end{remark}

\begin{example}
Consider the Hamiltonian of the form
$$
H(x, p) = \sigma(x)h(p)-f(x),
$$
where $h$ is a continuous, convex, nondecreasing, nonconstant function on $\mathbf{R}_{+}$;
$\sigma$ and $f$ are bounded continuous functions on $\mathcal{X}$ with $\inf_{x}\sigma > 0$.
Then the conditions (A1)--(A5) are fulfilled.
The Lagrangian $L$ becomes
$$
L(x, v) = \sigma(x)l\left(\frac{v}{\sigma(x)}\right)+f(x)
$$
with $l(v) = \sup_{p \in \mathbf{R}_{+}}(p v-h(p))$.
Typical examples of such $h$ and $l$ include
$$
h(p) = \frac{1}{2}p^{2},
\quad l(v) = \frac{1}{2}v^{2},
$$
and
$$
h(p) = p,
\quad l(v) =
\begin{cases}
0 & \text{if $v \leq 1$,} \\
\infty & \text{if $v > 1$.} \\
\end{cases}
$$
\end{example}

We introduce a class of trajectories.
\begin{definition}
Let $\mathit{AC}(I, \mathcal{X})$ denote the set of all absolutely continuous curves in $\mathcal{X}$ defined on a interval $I$ of $\mathbf{R}$.

Let $\mathcal{A}(\mathcal{X})$ be the set of all \emph{admissible} curves $\xi \in \mathit{AC}([0, \infty), \mathcal{X})$ such that the metric derivative $|\xi'|$ is piecewise constant and $L[\xi]$ is piecewise continuous;
$|\xi'|$ equals a constant $v_{I}$ and $r \mapsto L(\xi(r), v_{I})$ is continuous on each $I = [0, r_{1}], [r_{1}, r_{2}], \cdots, [r_{n}, \infty)$ with finitely many $r_{1}, \cdots, r_{n}$.
Let
$
\mathcal{A}_{x}(\mathcal{X}) = \{\xi \in \mathcal{A}(\mathcal{X}) \mid \xi(0) = x\}
$
for $x \in \mathcal{X}$.
\end{definition}

\begin{remark}
\begin{enumerate}
\item
Consider a constant curve $\xi(r) = x$ for a fixed point $x \in \mathcal{X}$.
Since $|\xi'| = 0$, we see that $\mathcal{A}_{x}(\mathcal{X})$ is nonempty for all $x \in \mathcal{X}$.
\item
For each $\xi \in \mathit{AC}(\mathbf{R}, \mathcal{X})$ with $\xi(0) = x$ take $\hat{\xi} \in \mathit{AC}((L^{-}, L^{+}), \mathcal{X})$ in the next proposition.
Set $\tilde{\xi}(r) = \hat{\xi}(V_{L}r)$ for $0 \leq r \leq L^{+}/V_{L}$ and $\tilde{\xi}(r) = \hat{\xi}(L_{+})$ for $r \geq L^{+}/V_{L}$ with $V_{L} > 0$ in (A4)'.
Then, since $|\hat{\xi}'| = V_{L}$ a.e.\ on $[0, L^{+}/V_{L})$, (A4)' and (A5) imply $\tilde{\xi} \in \mathcal{A}_{x}(\mathcal{X})$.
\end{enumerate}
\end{remark}

\begin{proposition}
\label{t425}
For $\xi \in \mathit{AC}(\mathbf{R}, \mathcal{X})$ set
\begin{equation}
\label{e429}
\tau_{\xi}(h) = \int_{0}^{h}|\xi'|\mathit{d}r
\quad \text{for $h \in \mathbf{R}$.}
\end{equation}
Then, there exists a curve $\hat{\xi} \in \mathit{AC}((L^{-}, L^{+}), \mathcal{X})$ such that
\begin{equation}
\label{e4210}
\xi = \hat{\xi}\circ\tau_{\xi}, \quad \text{$|\hat{\xi}'| = 1$ a.e.\ in $(L^{-}, L^{+})$}
\end{equation}
with $L^{\pm} := \lim_{h \to \pm\infty}\tau_{\xi}(h)$.
\end{proposition}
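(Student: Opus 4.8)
The key observation is that $\tau_\xi$ as defined in \eqref{e429} is the "arclength so far" function: it is nondecreasing, absolutely continuous (being an integral of the $L^1_{loc}$ function $|\xi'|$), and $\tau_\xi' = |\xi'|$ a.e. The target curve $\hat\xi$ should be defined so that $\hat\xi(s)$ is the point reached after travelling arclength $s$ along $\xi$. The main obstacle is that $\tau_\xi$ need not be strictly increasing — on intervals where $|\xi'| = 0$ the curve $\xi$ may fail to be constant? No: where $\tau_\xi$ is constant on an interval, $|\xi'| = 0$ a.e. there, hence $\xi$ is constant there (since $d(\xi(a),\xi(b)) \le \int_a^b |\xi'|\,dr = 0$). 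So $\xi$ is constant on each level set of $\tau_\xi$, which is exactly what makes the factorization $\xi = \hat\xi \circ \tau_\xi$ well-defined as a function.

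First I would set $L^\pm := \lim_{h\to\pm\infty}\tau_\xi(h) \in [-\infty,\infty]$ (these limits exist by monotonicity; they may be infinite, and the notation $(L^-,L^+)$ for the domain is understood accordingly). For $s \in (L^-, L^+)$, pick any $h$ with $\tau_\xi(h) = s$ — such $h$ exists because $\tau_\xi$ is continuous with this range — and define $\hat\xi(s) := \xi(h)$; by the previous paragraph this is independent of the choice of $h$. Next I would verify $\xi = \hat\xi \circ \tau_\xi$, which is immediate from the definition. Then I would check that $\hat\xi$ is absolutely continuous: for $s_1 < s_2$ in $(L^-,L^+)$, choosing $h_1 < h_2$ with $\tau_\xi(h_i) = s_i$, we get
$$
d(\hat\xi(s_1), \hat\xi(s_2)) = d(\xi(h_1), \xi(h_2)) \le \int_{h_1}^{h_2} |\xi'|\,dr = \tau_\xi(h_2) - \tau_\xi(h_1) = s_2 - s_1,
$$
so $\hat\xi$ is $1$-Lipschitz, in particular absolutely continuous, and $|\hat\xi'| \le 1$ a.e.

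Finally I would show $|\hat\xi'| = 1$ a.e. in $(L^-, L^+)$. The idea is a change of variables: the arclength of $\hat\xi$ over $(L^-,L^+)$ should equal the arclength of $\xi$ over $\mathbf R$, which is $\int_{\mathbf R}|\xi'|\,dr = L^+ - L^-$ (the total length of the interval $(L^-,L^+)$); since $|\hat\xi'|\le 1$, equality of the two integrals forces $|\hat\xi'| = 1$ a.e. To make this rigorous on a general metric space I would invoke the metric change-of-variables / reparametrization results in \cite[Chapter 1]{Ambrosio2008}: an absolutely continuous curve admits an arclength reparametrization whose metric derivative equals $1$ a.e., and the length is invariant under reparametrization. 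Concretely, one shows $\int_a^b |\hat\xi'|\,ds \ge d(\hat\xi(\tau_\xi(h_1)), \hat\xi(\tau_\xi(h_2)))$ is not enough; instead one uses that for a partition, $\sum d(\xi(h_{i-1}),\xi(h_i))$ can be made close to $\int |\xi'|$, and each term is $\le \int |\hat\xi'|$ over the corresponding arclength subinterval, giving $\int_{L^-}^{L^+}|\hat\xi'|\,ds \ge \int_{\mathbf R}|\xi'|\,dr = L^+-L^-$, hence $|\hat\xi'|=1$ a.e. The only real subtlety is handling the (possibly infinite) endpoints $L^\pm$ and the degenerate case $L^+ = L^-$ (where $\xi$ is constant and $(L^-,L^+)$ is empty, so the statement is vacuous); everything else is the standard metric-space arclength reparametrization argument.
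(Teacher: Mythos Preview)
Your argument is correct and is precisely the standard arclength-reparametrization construction for absolutely continuous curves in metric spaces: define $\hat\xi$ on the level sets of $\tau_\xi$, check it is $1$-Lipschitz, and then use length invariance to upgrade $|\hat\xi'|\le 1$ to $|\hat\xi'|=1$ a.e. The paper itself does not supply a proof of this proposition at all; it simply states that this is a well-known fact and refers the reader to \cite[Lemma~1.1.4]{Ambrosio2008}. Your sketch is essentially the content of that lemma, so there is nothing to compare beyond noting that you have written out what the paper only cites.
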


This is a well-known fact on the absolutely continuous curves.
We refer the reader to \cite[Lemma 1.1.4]{Ambrosio2008} for its proof.

In order to define a notion of a solution, we recall a notion of a superdifferential and a subdifferential in the viscosity sense.
For a continuous function $w$ defined on an open set $W$ in $\mathbf{R}^{N}$ define the \emph{superdifferential} $\mathit{D}^{+}w(x)$ and the \emph{subdifferential} $\mathit{D}^{-}w(x)$ at $x \in W$ as below:
\begin{align*}
\mathit{D}^{+}w(x) &:= \{\mathit{D}\varphi(x) \mid \text{$\varphi$ is a $\mathit{C}^{1}$ supertangent of $w$ at $x$}\}, \\
\mathit{D}^{-}w(x) &:= \{\mathit{D}\varphi(x) \mid \text{$\varphi$ is a $\mathit{C}^{1}$ subtangent of $w$ at $x$}\},
\end{align*}
where we say that $\varphi$ is a \emph{$\mathit{C}^{1}$ supertangent} (resp.\ \emph{subtangent}) of $w$ at $x$
if there exists a neighborhood $U \subset W$ of $x$ such that $\varphi \in \mathit{C}^{1}(U)$ and
$$
\max_{U}(w-\varphi) = (w-\varphi)(x). \quad \text{(resp.\ $\min_{U}(w-\varphi) = (w-\varphi)(x)$.)}
$$
As an analogue we define a suitable set of a superdifferential and a subdifferential for a piecewise continuous function $w$ defined on an interval $I$ in $\mathbf{R}$ at $h \in I$:
Set
\begin{align*}
\mathit{D}^{+, r}w(h) &:= \{\varphi'(h+0) \mid \text{$\varphi$ is a piecewise $\mathit{C}^{1}$ right supertangent of $w$ at $h$}\}, \\
\mathit{D}^{-, r}w(h) &:= \{\varphi'(h+0) \mid \text{$\varphi$ is a piecewise $\mathit{C}^{1}$ right subtangent of $w$ at $h$}\},
\end{align*}
where we say that $\varphi$ is a \emph{piecewise $\mathit{C}^{1}$ right supertangent} (resp.\ \emph{subtangent}) of $w$ at $h$
if there exists $r > 0$ such that $\varphi$ is piecewise $\mathit{C}^{1}$ on $[h, h+r)$ and
$$
\max_{[h, h+r)}(w-\varphi) = (w-\varphi)(h). \quad \text{(resp.\ $\min_{[h, h+r)}(w-\varphi) = (w-\varphi)(h)$.)}
$$

We define a notion of a subsolution and a supersolution of the equation \eqref{e411}.
Let $\mathcal{Q} := \mathcal{X}\times(0, T)$.
\begin{definition}
\label{d426}
Let $u$ be an arcwise continuous function in $\mathcal{Q}$;
for every $\xi \in \mathit{AC}(\mathbf{R}, \mathcal{X})$ the function $w(s, t) = u(\xi(s), t)$ is continuous in $\mathbf{R}\times(0, T)$.

We call $u$ a \emph{subsolution} of \eqref{e411}
if for each $(x, t) \in \mathcal{Q}$ and every $\xi \in \mathcal{A}_{x}(\mathcal{X})$
the inequality
\begin{equation}
\label{e425}
-p \leq L(\xi(h), |\xi'|(h+0))
\end{equation}
holds for all $p \in \mathit{D}^{+, r}w(h)$ and all $h \in [0, t)$.

We call $u$ a \emph{supersolution} of \eqref{e411}
if for each $(x, t) \in \mathcal{Q}$ and $\varepsilon > 0$
there exist $\xi \in \mathcal{A}_{x}(\mathcal{X})$ and a continuous function $w$ such that
\begin{equation}
\label{e426}
w(0) = u(x, t),
\quad w(h) \geq u(\xi(h), t-h)-\varepsilon
\end{equation}
and the inequality
\begin{equation}
\label{e427}
-p \geq L(\xi(h), |\xi'|(h+0))
\end{equation}
holds for all $p \in \mathit{D}^{-, r}w(h)$ and all $h \in [0, t)$.
\end{definition}

\section{Remarks on the solution}
\label{s43}

The definition of a subsolution and a supersolution is based on a sub- and superoptimality principle.
For simplicity write
$$
L[\xi](r) := L(\xi(r), |\xi'|(r)).
$$
The following propositions are valid:

\begin{proposition}
\label{t427}
For an arcwise continuous function $u$ on $\mathcal{Q}$ the following conditions are equivalent:
\begin{itemize}
\item[(i)]
$u$ is a subsolution of \eqref{e411}.
\item[(ii)]
$u$ satisfies a suboptimality:
For each $(x, t) \in \mathcal{Q}$ and each $\xi \in \mathcal{A}_{x}(\mathcal{X})$
the inequality
\begin{equation}
\label{e428}
u(x, t) \leq \int_{0}^{h}L[\xi]\mathit{d}r+u(\xi(h), t-h)
\quad \text{for all $h \in [0, t)$}
\end{equation}
holds.
\end{itemize}
\end{proposition}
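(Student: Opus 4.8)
The plan is to prove the two implications separately, using the correspondence between the viscosity inequality $-p \leq L[\xi](h+0)$ on the curve and the integral inequality \eqref{e428}. The natural bridge is the following elementary ODE fact: if $w$ is piecewise continuous on an interval and $g$ is piecewise continuous, then the differential inequality $-p \leq g(h)$ for all $p \in D^{+,r}w(h)$ and all $h$ is equivalent to $h \mapsto w(h) + \int_0^h g\,dr$ being nondecreasing. I would isolate this as the main technical step, since both directions reduce to it once we set $g(r) = L[\xi](r)$ and $w(h) = u(\xi(h), t-h)$ for a fixed admissible $\xi$ and fixed $(x,t)$.

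For the direction (i) $\Rightarrow$ (ii): fix $(x,t) \in \mathcal{Q}$ and $\xi \in \mathcal{A}_x(\mathcal{X})$, and set $w(h) = u(\xi(h), t-h)$, which is continuous by arcwise continuity of $u$. By hypothesis $-p \leq L(\xi(h), |\xi'|(h+0)) = L[\xi](h+0)$ for all $p \in D^{+,r}w(h)$ and all $h \in [0,t)$. Since $\xi$ is admissible, $|\xi'|$ is piecewise constant and $r \mapsto L(\xi(r), |\xi'|(r))$ is piecewise continuous; on each piece $L[\xi](h+0)$ agrees with $L[\xi](h)$ except possibly at the finitely many breakpoints, so the ODE lemma gives that $h \mapsto w(h) + \int_0^h L[\xi]\,dr$ is nondecreasing on $[0,t)$. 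Evaluating at $0$ and at $h$ and using $w(0) = u(x,t)$ yields \eqref{e428}. A small point to check carefully is that the conclusion is only asserted for $h < t$ (so $w$ stays in the open time interval $(0,T)$ and continuity is not an issue at the endpoint).

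For the direction (ii) $\Rightarrow$ (i): again fix $(x,t)$ and $\xi \in \mathcal{A}_x(\mathcal{X})$ and set $w(h) = u(\xi(h), t-h)$. The suboptimality inequality, applied at base point $(\xi(h_0), t-h_0)$ with the shifted tail of $\xi$ (which is again admissible — here I use that truncating and time-shifting an admissible curve stays admissible, the piecewise structure being preserved), gives $w(h_0) \leq \int_{h_0}^{h_1} L[\xi]\,dr + w(h_1)$ for $0 \le h_0 \le h_1 < t$; equivalently $h \mapsto w(h) + \int_0^h L[\xi]\,dr$ is nondecreasing. Then for any piecewise $C^1$ right supertangent $\varphi$ of $w$ at $h$, the function $w - \varphi$ has a right-local max at $h$, and combining monotonicity of $w + \int_0^{\cdot} L[\xi]$ with this gives $-\varphi'(h+0) \leq L[\xi](h+0) = L(\xi(h), |\xi'|(h+0))$, which is \eqref{e425}. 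The routine estimate here is the standard one: $0 \ge \limsup_{\delta \downarrow 0}\frac{(w-\varphi)(h+\delta)-(w-\varphi)(h)}{\delta} \ge -\varphi'(h+0) - \limsup_{\delta\downarrow 0}\frac1\delta\int_h^{h+\delta}L[\xi]\,dr = -\varphi'(h+0) - L[\xi](h+0)$, using right-continuity of $L[\xi]$ at $h$ along the relevant piece.

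The main obstacle I anticipate is purely bookkeeping at the finitely many breakpoints $r_1, \dots, r_n$ of the admissible curve: there $|\xi'|$ and $L[\xi]$ may be discontinuous, so the "nondecreasing" statement and the identification of $D^{+,r}w$ must be handled piecewise and then patched, and one must make sure the one-sided value $|\xi'|(h+0)$ appearing in \eqref{e425} is the correct one to compare against (it is, since the supertangent is a \emph{right} supertangent and $L[\xi]$ is right-continuous on each piece). Once the piecewise ODE lemma is stated cleanly, the rest is a direct verification.
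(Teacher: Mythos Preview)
Your proposal is correct and follows essentially the same route as the paper: both reduce the equivalence to the monotonicity of $\ell(h) := w(h) + \int_0^h L[\xi]\,dr$ with $w(h) = u(\xi(h), t-h)$, derive (ii) $\Rightarrow$ (i) via the shifted curve and a difference-quotient argument, and derive (i) $\Rightarrow$ (ii) by showing $D^{+,r}\ell \subset [0,\infty)$ forces $\ell$ to be nondecreasing. The paper carries out the last step by the standard penalization-contradiction argument (adding a linear term $\frac{c}{h}r$ and locating an interior maximum), which is exactly the content of your ``elementary ODE lemma.''
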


\begin{proposition}
\label{t428}
For an arcwise continuous function $u$ on $\mathcal{Q}$ the following conditions are equivalent:
\begin{itemize}
\item[(i)]
$u$ is a supersolution of \eqref{e411}.
\item[(ii)]
$u$ satisfies a superoptimality:
For each $(x, t) \in \mathcal{Q}$ and $\varepsilon > 0$
there exists $\xi \in \mathcal{A}_{x}(\mathcal{X})$
such that the inequality
\begin{equation}
\label{e424}
u(x, t) \geq \int_{0}^{h}L[\xi]\mathit{d}r+u(\xi(h), t-h)-\varepsilon
\quad \text{for all $h \in [0, t)$}
\end{equation}
holds.
\end{itemize}
\end{proposition}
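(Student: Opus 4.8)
The plan is to establish the two implications of Proposition~\ref{t428} separately, mimicking the scheme used for Proposition~\ref{t427} but taking care of the extra parameter $\varepsilon$ and of the fact that the admissible curve is here quantified existentially.

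\emph{From (ii) to (i).} Fix $(x,t)\in\mathcal{Q}$ and $\varepsilon>0$, and let $\xi\in\mathcal{A}_x(\mathcal{X})$ be a curve realizing \eqref{e424}. I would take as the pair required in Definition~\ref{d426} this same $\xi$ together with the primitive
$$
w(h):=u(x,t)-\int_0^h L[\xi]\,\mathit{d}r,\qquad h\in[0,t).
$$
Since $\xi$ is admissible, $L[\xi]$ is piecewise continuous, hence locally bounded and integrable, so $w$ is continuous, $w(0)=u(x,t)$, and \eqref{e424} is exactly $w(h)\ge u(\xi(h),t-h)-\varepsilon$, i.e.\ \eqref{e426}. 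To verify \eqref{e427}, observe that $L[\xi]$ admits at every $h$ a right limit equal to $L(\xi(h),|\xi'|(h+0))$, so that the right derivative $w'(h+0)=-L(\xi(h),|\xi'|(h+0))$ exists; if $\varphi$ is a piecewise $C^1$ right subtangent of $w$ at $h$, then dividing $(w-\varphi)(h')\ge(w-\varphi)(h)$ by $h'-h>0$ and letting $h'\downarrow h$ gives $\varphi'(h+0)\le w'(h+0)$, that is $-p\ge L(\xi(h),|\xi'|(h+0))$ for every $p\in\mathit{D}^{-,r}w(h)$.

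\emph{From (i) to (ii).} Fix $(x,t)\in\mathcal{Q}$ and $\varepsilon>0$ and choose $\xi\in\mathcal{A}_x(\mathcal{X})$ and a continuous $w$ as in Definition~\ref{d426}. The heart of the argument is to show that
$$
g(h):=w(h)+\int_0^h L[\xi]\,\mathit{d}r,\qquad h\in[0,t),
$$
is nonincreasing; granting this, $u(x,t)=w(0)=g(0)\ge g(h)=w(h)+\int_0^h L[\xi]\,\mathit{d}r\ge u(\xi(h),t-h)-\varepsilon+\int_0^h L[\xi]\,\mathit{d}r$, which is \eqref{e424}. To prove that $g$ is nonincreasing I argue by contradiction: suppose $g(a)<g(b)$ for some $a<b$ in $[0,t)$, put $\lambda:=(g(b)-g(a))/(b-a)>0$ and $m(h):=g(h)-\lambda h$, so that $m(a)=m(b)$. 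Let $h^{\star}$ be the largest minimizer of $m$ over $[a,b]$, replacing it by $h^{\star}:=a$ in the degenerate case when that minimizer equals $b$ (then $m\ge m(a)$ on all of $[a,b]$); in either case $h^{\star}\in[0,t)$ and there is $r>0$ with $m\ge m(h^{\star})$ on $[h^{\star},h^{\star}+r)$. Now $\varphi(h):=\lambda h-\int_0^h L[\xi]\,\mathit{d}r$ is piecewise $C^1$ and $w-\varphi=m$, so $\varphi$ is a piecewise $C^1$ right subtangent of $w$ at $h^{\star}$ with $\varphi'(h^{\star}+0)=\lambda-L(\xi(h^{\star}),|\xi'|(h^{\star}+0))\in\mathit{D}^{-,r}w(h^{\star})$; but then \eqref{e427} forces $\lambda-L(\xi(h^{\star}),|\xi'|(h^{\star}+0))\le-L(\xi(h^{\star}),|\xi'|(h^{\star}+0))$, i.e.\ $\lambda\le0$, a contradiction.

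\emph{Expected obstacle.} The only point with real content is the monotonicity of $g$ in the second implication: one must upgrade the pointwise one-sided viscosity inequality \eqref{e427} to a global comparison, and this has to be done with care at the finitely many break points of $|\xi'|$ using one-sided test functions — which is exactly the reason the right sub/superdifferentials $\mathit{D}^{\pm,r}$ and the piecewise $C^1$ tangents were introduced. I would also make explicit at the outset that the finiteness and integrability of $L[\xi]$ used freely above are consequences of admissibility: on each of the finitely many pieces $|\xi'|$ is constant and $r\mapsto L(\xi(r),|\xi'|)$ is a real-valued continuous, hence locally bounded, function.
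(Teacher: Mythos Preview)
Your proposal is correct and follows essentially the same route as the paper: in both directions you choose the same auxiliary objects ($w(h)=u(x,t)-\int_0^h L[\xi]$ for (ii)$\Rightarrow$(i), and $g=\ell=w+\int_0^\cdot L[\xi]$ for (i)$\Rightarrow$(ii)) and run the same linear-perturbation contradiction to pass from the one-sided viscosity inequality to monotonicity. The only cosmetic differences are that the paper picks a slope $c/h$ strictly smaller than $(\ell(h)-\ell(0))/h$, which forces the minimizer into $[0,h)$ and spares your separate treatment of the ``largest minimizer equals $b$'' case, and that the paper proves only $\ell(0)\ge\ell(h)$ rather than full monotonicity---but either version suffices.
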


\begin{proof}[Proof of Proposition \ref{t427}]
First show $\text{(ii)}\Rightarrow\text{(i)}$.
Fix $(x, t) \in \mathcal{Q}$, $\xi \in \mathcal{A}_{x}(\mathcal{X})$ and a piecewise $\mathit{C}^{1}$ right subtangent of $w(h) := u(\xi(h), t-h)$ at $h \in [0, h)$.
Note that by considering $\tilde{\xi}(r) = \xi(r+h)$ in the suboptimality we have
$$
\begin{aligned}
w(h) = u(\xi(h), t-h)
&\leq \int_{0}^{\theta}L[\tilde{\xi}]\mathit{d}r+u(\tilde{\xi}(\theta), t-h-\theta) \\
&= \int_{h}^{h+\theta}L[\xi]\mathit{d}r+w(h+\theta)
\end{aligned}
$$
for all $\theta \in [0, t-h)$.
We hence obtain
$$
\varphi(h)-\varphi(h+\theta)
\leq w(h)-w(h+\theta)
\leq \int_{h}^{h+\theta}L[\xi]\mathit{d}r,
$$
which implies \eqref{e425} with $p = \varphi(h+0)$ since $L[\xi]$ is piecewise continuous.

Next show $\text{(i)}\Rightarrow\text{(ii)}$.
Fix $(x, t) \in \mathcal{Q}$ and $\xi \in \mathcal{A}_{x}(\mathcal{X})$, and let
with $w(h) := u(\xi(h), t-h)$.
Note that $\int_{0}^{h}L[\xi]\mathit{d}r$ is piecewise $\mathit{C}^{1}$ on $[0, t)$ since $L[\xi]$ is piecewise continuous.
Therefore,
$$
\ell(h) := w(h)+\int_{0}^{h}L[\xi]\mathit{d}r
$$
satisfies $p \geq 0$ for all $p \in \mathit{D}^{+, r}\ell(h)$ and all $h \in [0, t)$ since \eqref{e425} holds for all
$$
p \in \mathit{D}^{+, r}w(h) = \mathit{D}^{+, r}\ell(h)-L(\xi(h), |\xi'|(h+0))
$$
We now claim that $\ell(0) \leq \ell(h)$ for each $h \in [0, t)$.
Suppose, on the contrary, that $\ell(0) > \ell(h)$ at some $h \in (0, t)$.
Since $\ell(0) > \ell(0)-c > \ell(h)$ holds for some positive number $c$,
the function
$$
\ell(r)+\frac{c}{h}r
$$
attains a maximum over $[0, h]$ at some $r^{*} \in [0, h)$.
Hence, we have $-c \geq 0$, which contradicts to $c > 0$.
Therefore, we see that $\ell(0) \leq \ell(h)$ and so \eqref{e428} holds for all $h \in [0, t)$.
\end{proof}

\begin{proof}[Proof of Proposition \ref{t428}]
First show $\text{(ii)}\Rightarrow\text{(i)}$.
For $(x, t) \in \mathcal{Q}$ and $\varepsilon > 0$ take $\xi \in \mathcal{A}_{x}(\mathcal{X})$ such that \eqref{e424} holds.
Set
$$
w(h) := u(x, t)-\int_{0}^{h}L[\xi]\mathit{d}r.
$$
so that \eqref{e426} holds.
Since $L[\xi]$ is piecewise continuous,
$w$ is piecewise $\mathit{C}^{1}$ and
$$
w'(h+0) = -L(\xi(h), |\xi'|(h+0))
\quad \text{for all $h \in [0, t)$,}
$$
which shows that $u$ is a supersolution.
Indeed, for each piecewise $\mathit{C}^{1}$ right subtangent $\varphi$ of $w$ at some $h \in [0, t)$,
we see that
$$
\varphi(h)-\varphi(h+\theta)
\geq w(h)-w(h+\theta)
= \int_{h}^{h+\theta}L[\xi]\mathit{d}r
$$
for all $\theta > 0$ small enough,
which implies \eqref{e427} with $p = \varphi(h+0)$.

Next show $\text{(i)}\Rightarrow\text{(ii)}$.
For each $(x, t) \in \mathcal{Q}$ and $\varepsilon > 0$
take $\xi \in \mathcal{A}_{x}(\mathcal{X})$ and a continuous function $w$ such that \eqref{e426} and \eqref{e427} hold.
Note that $\int_{0}^{h}L[\xi]\mathit{d}r$ is piecewise $\mathit{C}^{1}$ on $[0, t)$ since $L[\xi]$ is piecewise continuous.
Therefore,
$$
\ell(h) := w(h)+\int_{0}^{h}L[\xi]\mathit{d}r
$$
satisfies $p \leq 0$ for all $p \in \mathit{D}^{-, r}\ell(h)$ and all $h \in [0, t)$ since \eqref{e427} holds for all
$$
p \in \mathit{D}^{-, r}w(h) = \mathit{D}^{-, r}\ell(h)-L(\xi(h), |\xi'|(h+0))
$$
We now claim that $\ell(0) \geq \ell(h)$ for each $h \in [0, t)$.
Suppose, on the contrary, that $\ell(0) < \ell(h)$ at some $h \in (0, t)$.
Since $\ell(0) < \ell(0)+c < \ell(h)$ holds for some positive number $c$,
the function
$$
\ell(r)-\frac{c}{h}r
$$
attains a minimum over $[0, h]$ at some $r^{*} \in [0, h)$.
Hence, we have $c \leq 0$, which contradicts $c > 0$.
Therefore, we have $\ell(0) \geq \ell(h)$ so that
$$
w(0) \geq w(h)+\int_{0}^{h}L[\xi]\mathit{d}r,
$$
which yields \eqref{e424} by \eqref{e426} for all $h \in [0, t)$.
\end{proof}

Next let us consider relationship between a solution by Definition \ref{d426} and another one based on the characterization of the modulus of gradient \eqref{e413}.
Set
$$
\mathit{LC}^{1}_{x}(\mathcal{X}) = \{\xi \in \mathit{AC}([0, \infty), \mathcal{X}) \mid \text{$|\xi'| \leq 1$, $\xi(0) = x$, $|\xi'|$ is piecewise constant}\}.
$$

\begin{definition}[Metric viscosity solution]
Let $u$ be an arcwise continuous function on $\mathcal{Q}$.

We call $u$ a \emph{metric viscosity subsolution} of \eqref{e411}
if for each $(x, t) \in \mathcal{Q}$ and every $\xi \in \mathit{LC}^{1}_{x}(\mathcal{X})$
the inequality
\begin{equation}
\label{e431}
q+H(x, |p|) \leq 0
\end{equation}
holds for all $(p, q) \in \mathit{D}_{s, t}^{+}w(0, t)$ with $w(s, t) = u(\xi(s), t)$,
where
$$
\mathit{D}_{s, t}^{+}w(s, t) := \{(\varphi_{s}, \varphi_{t})(s, t) \mid \text{$\varphi$ is a $\mathit{C}^{1}$ supertangent of $w$ at $(s, t)$}\}.
$$
\end{definition}

We then have

\begin{proposition}
Assume (A1)--(A5) and let $u$ be an arcwise continuous function on $\mathcal{Q}$.
Then, the following conditions are equivalent:
\begin{itemize}
\item[(i)]
$u$ is a subsolution of \eqref{e411}.
\item[(ii)]
$u$ is a metric viscosity subsolution of \eqref{e411}.
\item[(iii)]
$u$ satisfies a suboptimality.
\end{itemize}
\end{proposition}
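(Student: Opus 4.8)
Since $\mathrm{(i)}\Leftrightarrow\mathrm{(iii)}$ is already contained in Proposition~\ref{t427}, the plan is to close the loop by proving $\mathrm{(i)}\Rightarrow\mathrm{(ii)}$ and $\mathrm{(ii)}\Rightarrow\mathrm{(i)}$. Both rest on two ingredients. The first is the Fenchel duality of Proposition~\ref{t421}: by \eqref{e422}, the inequality \eqref{e431} is equivalent to
$$
q+pv\le L(x,|v|)\qquad\text{for every }v\in\mathbf{R},
$$
and by \eqref{e421} also to $q+|p|v\le L(x,v)$ for every $v\ge0$. The second is the reparametrization of Proposition~\ref{t425}, which lets one pass between admissible curves of arbitrary piecewise constant speed and unit speed curves in $\mathit{LC}^{1}_{x}(\mathcal{X})$; the hypothesis (A5) is what keeps the reparametrized curves admissible near their base point.

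For $\mathrm{(i)}\Rightarrow\mathrm{(ii)}$, fix $(x,t)\in\mathcal{Q}$, $\eta\in\mathit{LC}^{1}_{x}(\mathcal{X})$, a $C^{1}$ supertangent $\varphi$ of $w(s,\tau)=u(\eta(s),\tau)$ at $(0,t)$, and set $(p,q)=(\varphi_{s},\varphi_{\tau})(0,t)$. Given $v>0$ with $L(x,v)<\infty$, let $\xi\in\mathcal{A}_{x}(\mathcal{X})$ run along $\eta$ near $h=0$ at constant speed $v$, forward or backward according to which we need (possible whenever $\eta$ is non-constant on that side of $0$); it is produced from the unit speed reparametrization of $\eta$ by a linear time change and extended to $[0,\infty)$ by stopping. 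Then $u(\xi(h),t-h)=w(\pm vh/|\eta'|(0\pm),t-h)$ for small $h\ge0$, so $h\mapsto u(x,t)+\varphi(\pm vh/|\eta'|(0\pm),t-h)-\varphi(0,t)$ is a $C^{1}$ right supertangent of $h\mapsto u(\xi(h),t-h)$ at $0$, and feeding it into \eqref{e425} gives $q\mp vp/|\eta'|(0\pm)\le L(x,v)$. Because $|\eta'|(0\pm)\le1$ and $L(x,\cdot)$ is nondecreasing (Proposition~\ref{t421}), choosing the direction with $\pm p/|\eta'|(0\pm)\le-|p|$ yields $q\le L(x,v)-|p|v$. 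The value $v=0$ and the boundary value $v=V_{L}(x)$ are recovered by the constant curve and by approximation, and when $\eta$ is locally constant on one side of $0$ the sign of $p$ is forced so that the other direction still applies. Hence $q+|p|v\le L(x,v)$ for all $v\ge0$, i.e.\ \eqref{e431}.

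For $\mathrm{(ii)}\Rightarrow\mathrm{(i)}$ I would argue by contradiction. If $u$ is a metric viscosity subsolution but fails \eqref{e425}, then after a time shift there are $(x,t)\in\mathcal{Q}$, $\xi\in\mathcal{A}_{x}(\mathcal{X})$ and a $C^{1}$ right supertangent $\sigma$ of $h\mapsto u(\xi(h),t-h)$ at $0$ with $-\sigma'(0+)>L(x,v_{0})$, where $v_{0}:=|\xi'|(0+)$. Assume $v_{0}>0$ (the case $v_{0}=0$ is disposed of separately by penalizing only in the time variable and using $L(x,0)=-H(x,0)$, which follows from (A2) and Proposition~\ref{t421}). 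Let $\hat\xi$ be the unit speed reparametrization of $\xi$, extended constantly to $\mathbf{R}$ so that $\hat\xi\in\mathit{LC}^{1}_{x}(\mathcal{X})$, and put $W(s,\tau):=u(\hat\xi(s),\tau)$; the supertangent inequality becomes, for suitable $\varepsilon_{0}>0$ and $A:=L(x,v_{0})+\varepsilon_{0}$,
$$
W\!\left(s,\,t-\tfrac{s}{v_{0}}\right)\le u(x,t)-A\,\tfrac{s}{v_{0}}\qquad\text{for }s\ge0\text{ small}.
$$
For $\delta>0$ let $(s_{\delta},\tau_{\delta})$ maximize over a small closed box around $(0,t)$ the function
$$
W(s,\tau)-u(x,t)+A\,\tfrac{s}{v_{0}}-\tfrac1\delta\!\left(\tau-t+\tfrac{s}{v_{0}}\right)^{2}-\mu s^{2},
$$
with $\mu>0$ fixed and small. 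Arcwise continuity makes $W$ continuous, so the maximum exists, and the standard penalization argument (using the previous display along the line $\tau=t-s/v_{0}$) shows $(s_{\delta},\tau_{\delta})\to(0,t)$; in particular it is interior for small $\delta$. There the smooth part of this functional is a $C^{1}$ supertangent of $W$; writing $(p_{\delta},q_{\delta})$ for its gradient at $(s_{\delta},\tau_{\delta})$, one has $q_{\delta}=v_{0}p_{\delta}+A+o(1)$, and applying \eqref{e431} at $(\hat\xi(s_{\delta}),\tau_{\delta})$ to the shifted curve $\hat\xi(\cdot+s_{\delta})$ gives
$$
0\ge q_{\delta}+H(\hat\xi(s_{\delta}),|p_{\delta}|)\ge v_{0}p_{\delta}+A+o(1)+|p_{\delta}|v_{0}-L(\hat\xi(s_{\delta}),v_{0}),
$$
the last step being \eqref{e421}. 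Hence $v_{0}(|p_{\delta}|+p_{\delta})\le L(\hat\xi(s_{\delta}),v_{0})-A+o(1)$, whose right side tends to $-\varepsilon_{0}<0$ by the continuity of $L$ on $D_{L}$ from (A5), while its left side equals $2v_{0}p_{\delta}^{+}\ge0$; this contradiction proves $\mathrm{(ii)}\Rightarrow\mathrm{(i)}$, and with Proposition~\ref{t427} the three conditions are equivalent.

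The implication $\mathrm{(ii)}\Rightarrow\mathrm{(i)}$ is where the real difficulty lies: one must manufacture a genuine two variable $C^{1}$ test function for $W$ out of a supertangent living only along the curve $\{(s,t-s/v_{0})\}$, and $u$ carries no modulus of continuity transverse to that curve. The penalization is the device that circumvents this, the crucial point being that the duality inequality \eqref{e421} forces the manifestly nonnegative quantity $v_{0}(|p_{\delta}|+p_{\delta})$ to be asymptotically no larger than $-\varepsilon_{0}$. The remaining work is routine bookkeeping: keeping the auxiliary curves in $\mathcal{A}_{x}(\mathcal{X})$ (where (A5) is used), treating the degenerate speeds $|\eta'|(0\pm)=0$, and localizing the penalized maxima at the diagonal point via the lower order correction.
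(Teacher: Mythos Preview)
Your outline is sound and the logical skeleton matches the paper's: both close the loop via Proposition~\ref{t427} for $\mathrm{(i)}\Leftrightarrow\mathrm{(iii)}$, and both handle the forward implication to $\mathrm{(ii)}$ by reparametrizing the $\mathit{LC}^{1}_{x}$ curve at an arbitrary admissible speed $v$, invoking the sub-property to get $q+|p|v\le L(x,v)$ for each $v<V_{L}(x)$, and then taking the supremum via \eqref{e421}--\eqref{e422}; the paper phrases this as $\mathrm{(iii)}\Rightarrow\mathrm{(ii)}$ (using the suboptimality inequality directly instead of \eqref{e425}), but the two are equivalent and the casework you describe for the degenerate directions and for $v=V_{L}(x)$ is exactly what the paper does.

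The genuine divergence is in the reverse step. The paper proves $\mathrm{(ii)}\Rightarrow\mathrm{(iii)}$ \emph{directly}: it first observes that \eqref{e431} in fact holds at every $(s,t)$ (not just at the base point), so that on each interval where $|\xi'|\equiv v$ the duality \eqref{e421} yields a viscosity subsolution inequality $q-pv-L(\hat\xi(s),v)\le 0$ for the linear transport operator $\partial_{t}-v\partial_{s}$; it then invokes the classical characteristics result for such subsolutions to obtain the integral inequality on that piece and concatenates. Your route instead reproves this classical fact by a penalization/contradiction argument. That is legitimate and more self-contained, but it carries the usual bookkeeping burden: your claim that $(s_{\delta},\tau_{\delta})\to(0,t)$ does not follow from ``the previous display'' alone, since that display controls $W$ only for $s\ge 0$ while the constant extension of $\hat\xi$ to $s<0$ gives no matching bound on $u(x,\tau)$ for $\tau>t$. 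This is not fatal---one can either restrict to a one-sided box and treat the edge $s=0$ separately, or simply note that convergence to $(0,t)$ is unnecessary because on a sufficiently small box the continuity of $L$ on $D_{L}$ from (A5) and a small choice of $\mu$ already make $L(\hat\xi(s_{\delta}),v_{0})-A-2\mu v_{0}s_{\delta}$ strictly negative, which is all the contradiction needs---but the sketch as written over-claims at that step. In short: your argument is a hands-on substitute for the paper's one-line citation; it buys self-containment at the price of these localization details.
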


\begin{proof}[Proof]
The statement $\text{(i)}\Leftrightarrow\text{(iii)}$ has already shown in Proposition \ref{t428}.

Show $\text{(iii)}\Rightarrow\text{(ii)}$.
Fix $(x, t) \in \mathcal{Q}$, $\xi \in \mathit{LC}^{1}_{x}(\mathbf{R}, \mathcal{X})$ and a $\mathit{C}^{1}$ supertangent $\varphi$ of $w(s, t) = u(\xi(s), t)$ at $(0, t)$.
In order to prove \eqref{e431}
we should show
\begin{equation}
\label{e432}
q+|p|v-L(x, v) \leq 0
\end{equation}
for all $v \in \mathbf{R}_{+}$.
Note that \eqref{e432} is trivial for $v > V_{L}(x)$, i.e.\ $L(x, v) = \infty$
and we only need to show \eqref{e432} for all $v < V_{L}(x)$
since letting $v \to V_{L}(x)$ yields \eqref{e432} at $v = V_{L}(x)$.
Take $\sigma(r) = \pm v r$ so that $\xi\circ\sigma$ is $v$-Lipschitz.
We now observe that
$$
\begin{aligned}
\varphi(0, t)-\varphi(\sigma(h), t-h)
&\leq u(x, t)-u(\xi(\sigma(h)), t-h) \\
&\leq \int_{0}^{h}L[\xi\circ\sigma]\mathit{d}r \\
&\leq \int_{0}^{h}L(\xi(\sigma(r)), v)\mathit{d}r
\end{aligned}
$$
for all $h \in [0, t)$ small enough.
Since $v < V_{L}(x)$ it follows from (A5) that $r \in [0, t) \mapsto L(\xi(\sigma(r)), v)$ is continuous at $0$.
Therefore, we have
$$
-p\sigma'(0)+q \leq L(x, v)
$$
which yields \eqref{e432} and so \eqref{e431} holds.

Next show $\text{(ii)}\Rightarrow\text{(iii)}$.
First note that for any $\hat{\xi} \in \mathit{LC}^{1}_{x}(\mathcal{X})$ the function $w(s, t) = u(\hat{\xi}(s), t)$ satisfies
\begin{equation}
\label{e435}
q+H(\xi(s), |p|)
= \sup_{v \in \mathbf{R}}\{q+p v-L(\xi(s), |v|)\}
\leq 0
\end{equation}
for all $(p, q) \in \mathit{D}_{s, t}^{+}w(s, t)$ and all $(s, t) \in \mathbf{R}\times(0, T)$.
Fix $(\hat{x}, \hat{t}) \in \mathcal{Q}$ and $\xi \in \mathcal{A}_{\hat{x}}(\mathcal{X})$.
Take $\hat{\xi} \in \mathit{LC}^{1}_{x}(\mathcal{X})$ and $\tau = \tau_{\xi}$ satisfying \eqref{e429}, \eqref{e4210}.
We show that
\begin{equation}
\label{e436}
w(0, \hat{t}) \leq w(\tau(h), \hat{t}-h)+\int_{0}^{h}L(\hat{\xi}(\tau(r)), \tau'(r))\mathit{d}r
\quad \text{for all $h \in [0, \hat{t})$}
\end{equation}
Note that $\tau'$ is piecewise constant and $L[\xi\circ\tau]$ is piecewise continuous.
Let us take an interval $I = (a, b)$ with $[a, b] \subset [0, \hat{t})$ on which $\tau'$ is constant $v \in \mathbf{R}_{+}$ and $r \mapsto L(\hat{\xi}(\tau(r)), v)$ is continuous.
By \eqref{e435}
$$
q-p v-L(\hat{\xi}(s), v) \leq 0
\quad \text{for all $(p, q) \in \mathit{D}^{+}_{s, t}w(s, t)$, $(s, t) \in J\times(0, T)$,}
$$
where $J = \tau(I)$ if $v > 0$ or $J = \mathbf{R}$ if $v = 0$.
By a classical result on viscosity solutions,
$$
w(s, t) \leq w(\lambda(h))+\int_{0}^{h}L(\hat{\xi}(s+v r), v)\mathit{d}r
$$
for all $(s, t) \in J\times(0, T)$ and $0 \leq h < h^{*} := \sup\{h \in [0, \infty) \mid \lambda(h) \in J\times(0, T)\}$
with $\lambda(h) = (s+v h, t-h)$.
Note that $h^{*} \geq (b-a)\wedge t$.
Letting $s \to \tau(a)$, $t = \hat{t}-a$, $h \to (b-a)\wedge t = b-a$,
we have
$$
w(\tau(a), \hat{t}-a) \leq w(\tau(b), \hat{t}-b)+\int_{a}^{b}L(\hat{\xi}(\tau(r)), v)\mathit{d}r.
$$
Combining such an inequality shows \eqref{e436}, which implies \eqref{e428}.
\end{proof}

\section{Unique existence theorem}
\label{s44}

We study the initial value problem of \eqref{e411} with
\begin{equation}
\label{e441}
u|_{t = 0} = u_{0} \quad \text{on $\mathcal{X}$,}
\end{equation}
where we assume that
\begin{itemize}
\item[(A6)]
$u_{0}$ is bounded and uniformly continuous on $\mathcal{X}$;
there exists a modulus $\omega_{u_{0}}$ such that
$$
|u_{0}(x)-u_{0}(y)| \leq \omega_{u_{0}}(d(x, y)) \quad \text{for all $x, y \in \mathcal{X}$.}
$$
\end{itemize}
Here, a modulus $\omega$ is a function of the class $\mathit{C}(\mathbf{R}_{+}, \mathbf{R}_{+})$ with $\omega(0) = 0$.

The main purpose of this section is to establish a unique existence theorem for \eqref{e411} and \eqref{e441}.
\begin{definition}
An arcwise continuous function $u$ on $\mathcal{X}\times[0, T)$ is called a \emph{solution} of the initial value problem \eqref{e411} and \eqref{e441}
if $u$ is both a subsolution and a supersolution of \eqref{e411}, and satisfies
\begin{equation}
\label{e4412}
u(0, x) = u_{0}(x) \quad \text{for all $x \in \mathcal{X}$.}
\end{equation}
\end{definition}

Consider the \emph{cost functional} to the initial value problem \eqref{e411} and \eqref{e441};
$$
C_{t}[\xi] := \int_{0}^{t}L[\xi]\mathit{d}r+u_{0}(\xi(t))
$$
for $t \in [0, T]$ and $\xi \in \mathcal{A}_{x}(\mathcal{X})$.
Define the \emph{value function} $U$ by
$$
U(x, t) = \sup_{\xi \in \mathcal{A}_{x}(\mathcal{X})}C_{t}[\xi]
\quad \text{for $(x, t) \in \mathcal{X}\times[0, T]$.}
$$


We will show that the value function is a unique solution of \eqref{e411} and \eqref{e441}.

\begin{theorem}
\label{t442}
Assume (A1)--(A6).
Then the value function $U$ is a unique solution of \eqref{e411} and \eqref{e441}.
\end{theorem}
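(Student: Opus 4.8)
The plan is to check that the value function $U$ is a solution of \eqref{e411}, \eqref{e441} and then, using the optimality characterisations of Propositions~\ref{t427} and~\ref{t428}, to show by comparison that it is the only one. Note first that $U(x,t)=\inf_{\xi\in\mathcal{A}_{x}(\mathcal{X})}C_{t}[\xi]$ is finite: the constant curve gives $U(x,t)\le t\sup_{x}L(x,0)+\sup u_{0}<\infty$ by (A4) (since $L(x,0)=-H(x,0)$), while $L[\xi]\ge-c_{H}(0)$ from (A3)' gives $U(x,t)\ge-tc_{H}(0)+\inf u_{0}>-\infty$. The initial condition \eqref{e4412} is immediate because $C_{0}[\xi]=u_{0}(\xi(0))=u_{0}(x)$ for every admissible $\xi$.

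The substantive part of the existence proof is the arcwise continuity of $U$ on $\mathcal{X}\times[0,T)$. Continuity in $t$ I would get by two one-sided comparisons: prolonging an almost optimal curve by a constant piece changes the cost by at most $|t-t'|\sup_{x}L(x,0)$, and truncating an almost optimal curve for the longer horizon changes it by at most $|t-t'|\,c_{H}(0)$ plus $\omega_{u_{0}}(d(\xi(t),\xi(t')))$, which is small because the coercivity \eqref{e4215} of (A3)' bounds the length of almost optimal curves, so that $d(\xi(t),\xi(t'))\le\int_{t\wedge t'}^{t\vee t'}|\xi'|$ is controlled (and over a short horizon keeps the curve near its base point, which gives continuity up to $t=0$). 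For continuity in the space variable along a fixed curve $\xi_{0}\in\mathit{AC}(\mathbf{R},\mathcal{X})$ I would, for $x=\xi_{0}(s)$ and $y=\xi_{0}(s')$, prepend to an almost optimal curve starting at $y$ a traversal of the sub-arc of $\xi_{0}$ between $s$ and $s'$ at the fixed speed $V_{L}$ of (A4)'; by (A4)' and (A5) this is an admissible curve, and the extra cost is at most $(\text{sub-arc length})\cdot\sup_{x}L(x,V_{L})/V_{L}$ plus the usual $\omega_{u_{0}}$ and $c_{H}(0)$ corrections, which all tend to $0$ as $s'\to s$ by absolute continuity of $\xi_{0}$. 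This is the step I expect to be the main obstacle: a general metric space has no translations, so a modulus of continuity for $U$ along an arc has to be built by hand from $\omega_{u_{0}}$ and the coercivity and continuity hypotheses rather than read off a comparison principle.

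For the sub- and supersolution properties I would establish the dynamic programming principle: since $C_{t}[\xi]=\int_{0}^{h}L[\xi]\,\mathit{d}r+C_{t-h}[\xi(\cdot+h)]$ with $\xi(\cdot+h)\in\mathcal{A}_{\xi(h)}(\mathcal{X})$, and admissible curves concatenate, one gets
$$
U(x,t)=\inf_{\xi\in\mathcal{A}_{x}(\mathcal{X})}\left\{\int_{0}^{h}L[\xi]\,\mathit{d}r+U(\xi(h),t-h)\right\},\qquad 0\le h<t.
$$
Keeping a single $\xi$ in this identity gives the suboptimality \eqref{e428} for that $\xi$, so $U$ is a subsolution by Proposition~\ref{t427}. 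For the supersolution property, given $\varepsilon>0$ pick $\xi\in\mathcal{A}_{x}(\mathcal{X})$ with $C_{t}[\xi]\le U(x,t)+\varepsilon$; since $C_{t-h}[\xi(\cdot+h)]\ge U(\xi(h),t-h)$ for every $h$, this single $\xi$ satisfies $\int_{0}^{h}L[\xi]\,\mathit{d}r+U(\xi(h),t-h)\le U(x,t)+\varepsilon$ for all $h\in[0,t)$, which is the superoptimality \eqref{e424}; so $U$ is a supersolution by Proposition~\ref{t428}. Hence $U$ is a solution.

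Finally, uniqueness. Let $u$ be any solution. As a subsolution it satisfies, by Proposition~\ref{t427}, $u(x,t)\le\int_{0}^{h}L[\xi]\,\mathit{d}r+u(\xi(h),t-h)$ for every $\xi\in\mathcal{A}_{x}(\mathcal{X})$ and $h\in[0,t)$; letting $h\uparrow t$ and using that $L[\xi]$ is integrable on $[0,t]$, that $\xi(t)$ is defined, and that $u$ is arcwise continuous up to $t=0$ with $u(\cdot,0)=u_{0}$, I obtain $u(x,t)\le\int_{0}^{t}L[\xi]\,\mathit{d}r+u_{0}(\xi(t))=C_{t}[\xi]$, hence $u\le U$ after taking the infimum over $\xi$. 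As a supersolution, for each $\varepsilon>0$ Proposition~\ref{t428} gives $\xi\in\mathcal{A}_{x}(\mathcal{X})$ with $u(x,t)\ge\int_{0}^{h}L[\xi]\,\mathit{d}r+u(\xi(h),t-h)-\varepsilon$ for all $h\in[0,t)$; letting $h\uparrow t$ gives $u(x,t)\ge C_{t}[\xi]-\varepsilon\ge U(x,t)-\varepsilon$, and $\varepsilon\downarrow0$ gives $u\ge U$. Thus $u=U$, proving the theorem. (The one delicate point here is the passage $h\uparrow t$, which is precisely why a solution is required to be arcwise continuous on $\mathcal{X}\times[0,T)$ up to the initial time and to take the value $u_{0}$ there.)
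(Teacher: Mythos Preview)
Your proposal is correct and follows essentially the same route as the paper: the paper likewise establishes boundedness (Lemma~\ref{t441}) and arcwise continuity of $U$ by prepending a speed-$V_{L}$ arc and invoking the coercivity \eqref{e4215} to control $\int|\bar{\xi}'|$ over short intervals (Proposition~\ref{t444} and the lemma following it), derives the sub- and supersolution properties from the dynamic programming identity exactly as you do (Theorem~\ref{t446}), and proves uniqueness by passing $h\uparrow t$ in the sub- and superoptimality inequalities to compare with the cost functional (Theorem~\ref{t448}). The only cosmetic difference is that the paper packages the last step as a general comparison $\sup_{\mathcal{Q}}(u-v)\leq\sup_{\mathcal{X}}(u(\cdot,0)-v(\cdot,0))$ between an arbitrary subsolution and supersolution, whereas you compare a solution $u$ directly with $U$; the argument is identical.
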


We first show a regularity of the value function.

\begin{lemma}
\label{t441}
Assume (A1)--(A6).
Then the value function $U$ satisfies
\begin{equation}
\label{e4411}
-c_{H}(0)t+\inf u_{0} \leq U(x, t) \leq t L(x, 0)+u_{0}(x)
\quad \text{for all $(x, t) \in \mathcal{X}\times[0, T]$.}
\end{equation}
\end{lemma}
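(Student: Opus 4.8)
The plan is to establish the two estimates independently, each by a direct comparison at the level of the cost functional $C_{t}[\xi]$ and without invoking the sub/supersolution characterizations of Section \ref{s43}. For the upper bound I would test the value function against a single convenient competitor; for the lower bound I would produce an estimate on $C_{t}[\xi]$ that is uniform over all admissible curves and then pass to the optimal value.

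For the upper bound $U(x,t)\le tL(x,0)+u_{0}(x)$, I would use the stationary curve $\xi\equiv x$. By the Remark following the definition of $\mathcal{A}(\mathcal{X})$, this curve lies in $\mathcal{A}_{x}(\mathcal{X})$: its metric derivative is identically $0$, hence piecewise constant, and $r\mapsto L(\xi(r),0)=L(x,0)$ is constant, hence continuous, so $L[\xi]$ is piecewise continuous. Its cost is $C_{t}[\xi]=\int_{0}^{t}L(x,0)\,\mathit{d}r+u_{0}(\xi(t))=tL(x,0)+u_{0}(x)$. Since $U(x,t)$ is the infimum of $C_{t}[\cdot]$ over $\mathcal{A}_{x}(\mathcal{X})$, it does not exceed the cost of this one competitor, which is exactly the claimed bound. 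I would also note that the right-hand side is a genuine real number: by (A2) and the monotonicity of $H$ in $p$ one has $L(x,0)=\sup_{p}(-H(x,p))=-H(x,0)$, which is finite because $H$ is real-valued and continuous by (A1).

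For the lower bound $-c_{H}(0)t+\inf u_{0}\le U(x,t)$, I would show that every competitor already clears it. Fix $\xi\in\mathcal{A}_{x}(\mathcal{X})$. By (A3)' the integrand satisfies $L[\xi](r)=L(\xi(r),|\xi'|(r))\ge -c_{H}(0)$ for every $r$, so $\int_{0}^{t}L[\xi]\,\mathit{d}r\ge -c_{H}(0)\,t$; the integral is well defined in $(-\infty,+\infty]$ since $L[\xi]$ is piecewise continuous and bounded below. The terminal term satisfies $u_{0}(\xi(t))\ge \inf u_{0}$, which is finite by the boundedness in (A6). Adding the two estimates gives $C_{t}[\xi]\ge -c_{H}(0)t+\inf u_{0}$, and since this bound is uniform in $\xi$, passing to the infimum over $\mathcal{A}_{x}(\mathcal{X})$ yields the claim.

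I do not anticipate a substantive obstacle: the whole argument reduces to choosing the stationary curve in one direction and invoking the coercivity estimate (A3)' in the other. The only steps deserving an explicit line are the admissibility of the constant curve in $\mathcal{A}_{x}(\mathcal{X})$ (already isolated in the Remark) and the finiteness of $L(x,0)$, so that the upper bound is a real number rather than $+\infty$; both are immediate from (A1)--(A2). Notably, no continuity or regularity of $U$ itself is used, which is precisely what makes this estimate a natural preliminary to the existence argument of Theorem \ref{t442}.
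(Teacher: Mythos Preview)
Your proof is correct and essentially identical to the paper's: the constant curve $\xi_0\equiv x$ yields the upper bound $U(x,t)\le C_t[\xi_0]=tL(x,0)+u_0(x)$, and the uniform estimate $L\ge -c_H(0)$ from (A3)' together with the boundedness of $u_0$ yields the lower bound on every $C_t[\xi]$. You correctly treat $U$ as the \emph{infimum} of $C_t[\cdot]$ over $\mathcal{A}_x(\mathcal{X})$; the displayed definition in the paper reads $\sup$, but this is a typo, since the lemma statement, its proof, Theorem~\ref{t446}, and the dynamic programming remark all require $U=\inf$.
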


\begin{proof}[Proof]
Let $(x, t) \in \mathcal{X}\times[0, T]$.
Since $\xi_{0}(r) =x$ is of $\mathcal{A}_{x}(\mathcal{X})$,
we have
$$
C_{t}[\xi_{0}] = t L(x, 0)+u_{0}(x) < \infty.
$$
For each $\xi \in \mathcal{A}_{x}(\mathcal{X})$, the assumptions imply that
$$
C_{t}[\xi]
= \int_{0}^{t}L[\xi]\mathit{d}r+u_{0}(\xi(h))
\geq -c_{H}(0)t+\inf u_{0}
> -\infty.
$$
Therefore, we have \eqref{e4411}.
\end{proof}

\begin{proposition}
\label{t444}
Assume (A1)--(A6).
Then the value function $U$ is bounded and arcwise uniformly continuous on $\mathcal{X}\times[0, T]$, i.e.\ for each $\xi \in \mathit{Lip}^{1}(\mathcal{X})$ the function $w(s, t) = u(\xi(s), t)$ is uniformly continuous in $\mathbf{R}\times[0, T]$.
\end{proposition}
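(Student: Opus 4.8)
The plan is to deduce boundedness straight from Lemma~\ref{t441}, and then to prove two separate moduli of continuity for $U$ — one in $t$, uniform in $x$, and one in $s$ along the fixed curve $\xi$, uniform in $t$ — by comparing a near-optimal trajectory for one data point with an explicitly modified trajectory issuing from the other. Boundedness: by Lemma~\ref{t441} we have $-c_{H}(0)T+\inf u_{0}\le U(x,t)\le tL(x,0)+u_{0}(x)$; since $L(x,0)=-H(x,0)$ by (A2), hypotheses (A3)--(A4) bound $\sup_{x}|H(x,0)|$ and (A6) bounds $u_{0}$, so $U$ takes values in a fixed bounded interval on $\mathcal{X}\times[0,T]$. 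The engine of the continuity estimates is the following uniform short-time length bound: there is a modulus $\omega_{\ell}$, depending only on $H$, $T$, $u_{0}$, such that every $\zeta\in\mathcal{A}_{z}(\mathcal{X})$ with $C_{t}[\zeta]\le U(z,t)+1$ satisfies $\int_{a}^{b}|\zeta'|\,\mathrm{d}r\le\omega_{\ell}(b-a)$ for all $[a,b]\subset[0,t]$, $z\in\mathcal{X}$, $t\in[0,T]$. To get it, observe that $\int_{0}^{t}L[\zeta]\,\mathrm{d}r=C_{t}[\zeta]-u_{0}(\zeta(t))$ is bounded above by a constant and $L[\zeta]\ge -c_{H}(0)$ by (A3)$'$, so $\int_{a}^{b}L[\zeta]\,\mathrm{d}r\le M$ for a constant $M$; then \eqref{e4215} of (A3)$'$ provides, for each $\Lambda>0$, a constant $C_{\Lambda}$ with $\ell(v)\ge \Lambda v-C_{\Lambda}$ for all $v\ge0$, whence $\int_{a}^{b}|\zeta'|\,\mathrm{d}r\le\bigl(M+C_{\Lambda}(b-a)\bigr)/\Lambda$ because $L[\zeta]\ge\ell(|\zeta'|)$, and taking the infimum over $\Lambda$ defines $\omega_{\ell}$. (If $\ell\equiv+\infty$ past some speed, admissibility already caps $|\zeta'|$ and the bound is trivial.)

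Next I would prove continuity in $t$ uniform in $x$. Fix $x$ and $0\le t<t+\delta\le T$. For a curve $\zeta\in\mathcal{A}_{x}(\mathcal{X})$ with $C_{t}[\zeta]\le U(x,t)+\varepsilon$, appending the constant curve after time $t$ gives $\eta\in\mathcal{A}_{x}(\mathcal{X})$ with $C_{t+\delta}[\eta]=C_{t}[\zeta]+\delta L(\zeta(t),0)$, so $U(x,t+\delta)\le U(x,t)+\bigl(\sup_{x}|L(x,0)|\bigr)\delta$ after $\varepsilon\downarrow0$. Conversely, for $\eta\in\mathcal{A}_{x}(\mathcal{X})$ with $C_{t+\delta}[\eta]\le U(x,t+\delta)+\varepsilon\le U(x,t+\delta)+1$, truncating $\eta$ to $[0,t]$ and appending a constant curve gives $\zeta\in\mathcal{A}_{x}(\mathcal{X})$ with
$$
C_{t}[\zeta]-C_{t+\delta}[\eta]=-\int_{t}^{t+\delta}L[\eta]\,\mathrm{d}r+u_{0}(\eta(t))-u_{0}(\eta(t+\delta))\le c_{H}(0)\delta+\omega_{u_{0}}\bigl(\omega_{\ell}(\delta)\bigr),
$$
using $L[\eta]\ge -c_{H}(0)$ and $d(\eta(t),\eta(t+\delta))\le\int_{t}^{t+\delta}|\eta'|\,\mathrm{d}r\le\omega_{\ell}(\delta)$; letting $\varepsilon\downarrow0$ gives $U(x,t)\le U(x,t+\delta)+c_{H}(0)\delta+\omega_{u_{0}}(\omega_{\ell}(\delta))$. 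Together these give $|U(x,t)-U(x,t')|\le\omega_{2}(|t-t'|)$ for a modulus $\omega_{2}$ independent of $x$.

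Then I would prove continuity in $s$ uniform in $t$. Fix $\xi\in\mathit{Lip}^{1}(\mathcal{X})$, $t\in[0,T]$, $s<s'$, and put $y=\xi(s)$, $z=\xi(s')$. Reparametrize $\xi|_{[s,s']}$ to arc length by Proposition~\ref{t425} and then to constant speed $V_{L}$ from (A4)$'$, obtaining $\bar\gamma\in\mathit{AC}([0,\tau],\mathcal{X})$ from $y$ to $z$ with $\tau\le(s'-s)/V_{L}$, $|\bar\gamma'|\equiv V_{L}$, and $L[\bar\gamma]$ continuous on $[0,\tau]$ and bounded by $\sup_{x}L(x,V_{L})<\infty$ (by (A4)$'$ and (A5)). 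If $\tau\le t$, concatenate $\bar\gamma$ with the time shift $\zeta(\cdot-\tau)$ of a curve $\zeta$ with $C_{t}[\zeta]\le U(z,t)+\varepsilon$ to obtain $\eta\in\mathcal{A}_{y}(\mathcal{X})$; then
$$
C_{t}[\eta]-C_{t}[\zeta]=\int_{0}^{\tau}L[\bar\gamma]\,\mathrm{d}r-\int_{t-\tau}^{t}L[\zeta]\,\mathrm{d}r+u_{0}(\zeta(t-\tau))-u_{0}(\zeta(t))\le K\tau+\omega_{u_{0}}\bigl(\omega_{\ell}(\tau)\bigr)
$$
for a constant $K$, so $U(y,t)\le U(z,t)+K\tau+\omega_{u_{0}}(\omega_{\ell}(\tau))$ after $\varepsilon\downarrow0$, and by symmetry the reverse inequality holds. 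If $\tau>t$ (which forces $s'-s>V_{L}t$), estimate instead $|U(\xi(s),t)-U(\xi(s'),t)|\le 2\omega_{2}(t)+\omega_{u_{0}}(s'-s)$ by the $t$-modulus together with $U(\cdot,0)=u_{0}$, and use $t<(s'-s)/V_{L}$. In either case $|U(\xi(s),t)-U(\xi(s'),t)|\le\omega_{1}(|s-s'|)$ for a modulus $\omega_{1}$ independent of $t$, so $|w(s,t)-w(s',t')|\le\omega_{1}(|s-s'|)+\omega_{2}(|t-t'|)$, which is the assertion.

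The main obstacle is the uniform short-time length bound: the uniformity of both $\omega_{1}$ and $\omega_{2}$ — over all near-optimal trajectories and over all base points and times — rests entirely on it, and it is precisely there that the superlinear growth condition (A3)$'$, i.e.\ \eqref{e4215}, is indispensable. A lesser nuisance is that the spatial estimate for small $t$ cannot be obtained by comparing trajectories and has to be routed through the temporal estimate.
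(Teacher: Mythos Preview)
Your proof is correct and follows essentially the same strategy as the paper: both compare a near-optimal trajectory at one point with a modified trajectory issuing from the other (prefixing a segment of $\xi$ reparametrized to constant speed $V_{L}$, inserting a pause, then shifting), and both reduce the only nontrivial term---the displacement of the endpoint of the near-optimal curve over a short interval---to the superlinear growth \eqref{e4215} of $\ell$. The organizational differences are minor: the paper treats the $s$- and $t$-increments in a single construction and defers the length estimate to a terminal lemma stated as a limit, whereas you extract the length bound as a quantitative modulus $\omega_{\ell}$ at the outset and then prove separate moduli in $t$ and in $s$; your explicit handling of the regime $\tau>t$ via the already-established $t$-modulus is a detail the paper leaves implicit.
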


\begin{proof}[Proof]
First note that \eqref{e4411} implies
\begin{equation}
\label{e442}
-|c_{H}(0)|T+\inf u_{0} \leq U(x, t) \leq T|\sup L(x, 0)|+\sup u_{0}.
\end{equation}
In particular, $U$ is a bounded function.

Fix $\xi \in \mathit{AC}(\mathcal{X})$.
In order to show continuity of $(s, t) \mapsto U(\xi(s), t)$ let us estimate $U(\xi(s), t)-U(\xi(\bar{s}), \bar{t})$ for $s, \bar{s} \in \mathbf{R}$, $t, \bar{t} \in [0, T]$.
Let $\varepsilon > 0$.
By the definition of $U(\xi(\bar{s}), \bar{t})$ there exists a curve $\bar{\xi} \in \mathcal{A}_{x}(\mathcal{X})$ such that
\begin{equation}
\label{e443}
U(\xi(\bar{s}), \bar{t})
\geq \int_{0}^{\bar{t}}L[\bar{\xi}]\mathit{d}r+u_{0}(\bar{\xi}(\bar{t}))-\varepsilon.
\end{equation}
We now construct a curve $\tilde{\xi} \in \mathcal{A}_{x}(\mathcal{X})$ such that
$$
\tilde{\xi}(r) = \bar{\xi}(r-r_{0}) \quad \text{for $r \geq r_{0}$}
$$
with some $r_{0} \geq 0$.
Note that for such a curve
\begin{equation}
\label{e444}
\begin{aligned}
U(\xi(s), t)
&\leq \int_{0}^{t}L[\tilde{\xi}]\mathit{d}r+u_{0}(\tilde{\xi}(t)) \\
&= \int_{0}^{r_{0}}L[\tilde{\xi}]\mathit{d}r+\int_{0}^{t-r_{0}}L[\bar{\xi}]\mathit{d}r+u_{0}(\bar{\xi}(t-r_{0})).
\end{aligned}
\end{equation}
Set
$$
\tilde{\xi}(r) =
\begin{cases}
\hat{\xi}(\tau(s)+V_{L}\frac{\tau(\bar{s})-\tau(s)}{|\tau(\bar{s})-\tau(s)|}r) & \text{for $0 \leq r \leq |\tau(\bar{s})-\tau(s)|/V_{L} =: r_{1}$} \\
\xi(\bar{s}) & \text{for $r_{1} \leq r \leq r_{1}+|\bar{t}-t| =: r_{0}$} \\
\end{cases}
$$
with $\hat{\xi}$ and $\tau = \tau_{\xi}$ taken by Proposition \ref{t425}.
Then, noting that $|\tilde{\xi}'| = 0$ on $[0, r_{1}]$ and $|\tilde{\xi}'| = V_{L}$ on $[r_{1}, r_{0}]$,
we have
\begin{equation}
\label{e445}
R_{1} := \int_{0}^{r_{0}}L[\tilde{\xi}]\mathit{d}r
\leq \int_{0}^{r_{0}}L(\tilde{\xi}(r), V_{L})\mathit{d}r
\leq r_{0}\sup_{x}L(x, V_{L})
\end{equation}
The inequalities \eqref{e443}--\eqref{e445} yields
\begin{equation}
\label{e446}
U(\xi(s), t)-U(\xi(\bar{s}), \bar{t})
\leq R_{1}-R_{2}+R_{3}+\varepsilon,
\end{equation}
where
$$
R_{2} := \int_{t-r_{0}}^{\bar{t}}L[\bar{\xi}]\mathit{d}r,
\quad R_{3} := u_{0}(\bar{\xi}(t-r_{0}))-u_{0}(\bar{\xi}(\bar{t})).
$$
Now noting that
$$
t-r_{0} = t-|\tau(\bar{s})-\tau(s)|/V_{L}-|\bar{t}-t| \leq \bar{t},
$$
we hence see that
\begin{equation}
\label{e447}
R_{2}
\geq -c_{H}(\bar{t}-t+r_{0}).
\end{equation}
We also have
\begin{equation}
\label{e448}
R_{3} \leq \omega_{u_{0}}\left(\int_{t-r_{0}}^{\bar{t}}|\bar{\xi}'|\mathit{d}r\right).
\end{equation}
holds in any cases.
Combining \eqref{e445}--\eqref{e448}, we have
$$
\begin{aligned}
U(\xi(s), t)-U(\xi(\bar{s}), \bar{t}) \leq
&r_{0}\sup_{x}L(x, V_{L})+c_{H}(0)(\bar{t}-t+r_{0}) \\
&+\omega_{u_{0}}\left(\int_{t-r_{0}}^{\bar{t}}|\bar{\xi}'|\mathit{d}r\right)+\varepsilon.
\end{aligned}
$$
Note that
$$
r_{1} = |\tau(\bar{s})-\tau(s)|/V_{L} = \frac{1}{V_{L}}\left|\int_{s}^{\bar{s}}|\xi'|\mathit{d}r\right|
\leq \frac{|\bar{s}-s|}{V_{L}} \to 0
$$
as $|\bar{s}-s| \to 0$.
Therefore, if we show
\begin{equation}
\label{e4410}
\lim_{\delta \downarrow 0}\sup_{|\bar{s}-s|+|\bar{t}-t| \leq \delta}\int_{t-r_{0}}^{\bar{t}}|\bar{\xi}'|\mathit{d}r
= 0,
\end{equation}
the proof is completed.

By \eqref{e442} and \eqref{e446} we observe that
$$
\begin{aligned}
&\int_{t-r_{0}}^{\bar{t}}L(\bar{\xi}(r), |\bar{\xi}'|(r))\mathit{d}r \\
&\quad \leq r_{0}\sup_{x}L(x, V_{L})+R_{3}+\varepsilon-U(\xi(s), t)+U(\xi(\bar{s}), \bar{t}) \\
&\quad \leq 4\sup|g|+1+(\mathrm{Lip}[\xi]/V_{L}+1)|\sup_{x}L(x, V_{L})|+T|\sup_{x}L(x, 0)|+|c_{H}(0)|T,
\end{aligned}
$$
for $\varepsilon < 1$, $|\bar{s}-s|+|\bar{t}-t| < 1$ and hence
$$
\int_{t-r_{0}}^{\bar{t}}\ell(|\bar{\xi}'|)\mathit{d}r \leq C < \infty
$$
holds with some constant $C$ independent of $\bar{s}, s, \bar{t}, t$.
Therefore, the next lemma shows \eqref{e4410}.
\end{proof}

\begin{lemma}
Let $\{v_{n}\}$ be a sequence of nonnegative, Lebesgue measurable functions $v_{n}$ defined on $[0, s_{n}]$ with $s_{n} \downarrow 0$ such that
$$
\int_{0}^{s_{n}}\ell(v_{n}(r))\mathit{d}r \leq C \quad \text{for $n \in \mathbf{N}$}
$$
holds for some constant $C$.
Then,
$$
\int_{0}^{s_{n}}v_{n}(r)\mathit{d}r \to 0 \quad \text{as $n \to \infty$.}
$$
\end{lemma}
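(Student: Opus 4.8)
The plan is to use the superlinearity property \eqref{e4215} of $\ell$, namely $\liminf_{v\to\infty}\ell(v)/v=\infty$, which is exactly the kind of growth condition that forces uniform integrability. Fix $\varepsilon>0$. By \eqref{e4215} there is a threshold $M=M(\varepsilon)>0$ such that $\ell(v)\geq v/\varepsilon$ whenever $v\geq M$; equivalently $v\leq \varepsilon\,\ell(v)$ on $\{v\geq M\}$. I would then split each integral $\int_{0}^{s_{n}}v_{n}$ according to whether $v_{n}(r)\geq M$ or $v_{n}(r)<M$.

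On the set $\{r\in[0,s_n] : v_n(r)\geq M\}$ one has
$$
\int_{\{v_n\geq M\}}v_n(r)\,\mathrm{d}r
\leq \varepsilon\int_{\{v_n\geq M\}}\ell(v_n(r))\,\mathrm{d}r
\leq \varepsilon\Bigl(C+s_n\cdot\bigl(-\inf\ell\bigr)^{+}\Bigr),
$$
where the last inequality uses that $\ell$ is bounded below (by $-c_H(0)$, per (A3)$'$) so that $\int_{0}^{s_n}\ell(v_n)\,\mathrm{d}r\leq C$ together with the lower bound controls $\int_{\{v_n\geq M\}}\ell(v_n)\,\mathrm{d}r$ from above by $C$ plus a term of order $s_n$. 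On the complementary set one simply estimates
$$
\int_{\{v_n<M\}}v_n(r)\,\mathrm{d}r\leq M\,s_n.
$$
Adding the two contributions gives
$$
\int_{0}^{s_n}v_n(r)\,\mathrm{d}r\leq \varepsilon\bigl(C+s_n(-\inf\ell)^{+}\bigr)+M\,s_n.
$$
Since $s_n\downarrow 0$, letting $n\to\infty$ yields $\limsup_{n\to\infty}\int_{0}^{s_n}v_n\,\mathrm{d}r\leq \varepsilon C$, and since $\varepsilon>0$ is arbitrary the limit is $0$.

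The only genuinely delicate point is bookkeeping the lower bound on $\ell$: the hypothesis controls $\int_0^{s_n}\ell(v_n)\,\mathrm{d}r$ from above, but to pass to $\int_{\{v_n\geq M\}}\ell(v_n)\,\mathrm{d}r$ one must discard the integral over $\{v_n<M\}$, on which $\ell$ could be negative, which is why the term $s_n(-\inf\ell)^{+}$ appears. Because $\ell$ is bounded below under (A3)$'$ this term is harmless and vanishes in the limit; if one only wanted the cleaner statement, noting $\ell\geq -c_H(0)$ makes everything explicit. No other step presents any obstacle — it is the standard de la Vallée-Poussin–type argument, and the shrinking domain $s_n\downarrow 0$ actually makes it easier than the usual uniform-integrability version.
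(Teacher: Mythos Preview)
Your proof is correct and is essentially the same argument as the paper's: both split the integral according to whether $v_n$ exceeds a threshold, use the superlinearity \eqref{e4215} on the large-values set and the trivial bound on the small-values set, and then send the threshold parameter to its limit. Your $\varepsilon$ plays the role of the paper's $1/M$ and your threshold $M=M(\varepsilon)$ corresponds to the paper's $V_M$; your bookkeeping of the lower bound of $\ell$ via $(-\inf\ell)^{+}$ is in fact slightly cleaner than the paper's handling of the same term.
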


\begin{proof}[Proof]
Note that \eqref{e4215} implies that for every large $M > 0$ there exists $V_{M} \geq 0$ such that $\ell(v) \geq M v$ holds for all $v \leq V_{M}$.
We now observe that
$$
\begin{aligned}
\int_{0}^{s_{n}}\ell(v_{n}(r))\mathit{d}r
&= \int_{[0, s_{n}]\cap\{v_{n} \geq V_{M}\}}\ell(v_{n}(r))\mathit{d}r+\int_{[0, s_{n}]\cap\{v_{n} < V_{M}\}}\ell(v_{n}(r))\mathit{d}r \\
&\geq M\int_{[0, s_{n}]\cap\{v_{n} \geq V_{M}\}}v_{n}(r)\mathit{d}r-c_{H}(0)s_{n}.
\end{aligned}
$$
We also see by this that
$$
\begin{aligned}
\int_{0}^{s_{n}}v_{n}(r)\mathit{d}r
&= \int_{[0, s_{n}]\cap\{v_{n} \geq V_{M}\}}v_{n}(r)\mathit{d}r+\int_{[0, s_{n}]\cap\{v_{n} < V_{M}\}}v_{n}(r)\mathit{d}r \\
&\leq \frac{1}{M}\int_{[0, s_{n}]\cap\{v_{n} \geq V_{M}\}}\ell(v_{n}(r))\mathit{d}r+\frac{C_{H}}{M}s_{n}+V_{M}s_{n} \\
&\leq \frac{C}{M}+\frac{C_{H}}{M}s_{n}+V_{M}s_{n}.
\end{aligned}
$$
Therefore,
$$
\limsup_{n \to \infty}\int_{0}^{s_{n}}v_{n}(r)\mathit{d}r \leq \frac{C}{M}
$$
for all $M > 0$,
and so letting $M \to \infty$ implies the conclusion.
\end{proof}

\begin{theorem}
\label{t446}
Assume (A1)--(A6).
Then the value function $U$ is a solution of \eqref{e411}, \eqref{e441}.
\end{theorem}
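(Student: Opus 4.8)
The plan is to verify, one at a time, the three requirements in the definition of a solution of \eqref{e411}, \eqref{e441}: that $U$ is arcwise continuous on $\mathcal{X}\times[0,T]$, that $U$ is simultaneously a subsolution and a supersolution of \eqref{e411}, and that it satisfies the initial condition \eqref{e4412}. Arcwise continuity (and boundedness) of $U$ is exactly Proposition \ref{t444}, which already carries the only genuinely analytic ingredient, so nothing new is needed there; it is also what lets us apply Propositions \ref{t427} and \ref{t428}, both of which presuppose arcwise continuity. The initial condition is immediate: for every $\xi \in \mathcal{A}_x(\mathcal{X})$ one has $C_0[\xi] = u_0(\xi(0)) = u_0(x)$, hence $U(x,0) = u_0(x)$. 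What remains is to produce, for each $(x,t) \in \mathcal{Q}$, the suboptimality inequality \eqref{e428} for $U$ and the superoptimality inequality \eqref{e424} for $U$; then Proposition \ref{t427} gives that $U$ is a subsolution and Proposition \ref{t428} that $U$ is a supersolution.

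Both inequalities are instances of a dynamic programming principle, and the only real point is that the admissible class $\mathcal{A}$ is closed under the two cut-and-paste operations one needs. First, if $\xi \in \mathcal{A}_x(\mathcal{X})$ and $h \ge 0$, then the time-shift $r \mapsto \xi(r+h)$ belongs to $\mathcal{A}_{\xi(h)}(\mathcal{X})$, since restricting and translating the partition keeps $|\xi'|$ piecewise constant and $L[\xi]$ piecewise continuous; moreover, by additivity of the integral and the change of variable $r \mapsto r-h$,
$$
C_t[\xi] = \int_0^h L[\xi]\,\mathit{d}r + C_{t-h}\big[\xi(\cdot+h)\big] \qquad \text{for } 0 \le h \le t .
$$
Second, if $\xi \in \mathcal{A}_x(\mathcal{X})$, $h \ge 0$ and $\bar\xi \in \mathcal{A}_{\xi(h)}(\mathcal{X})$, then the concatenation $\eta$ defined by $\eta = \xi$ on $[0,h]$ and $\eta(r) = \bar\xi(r-h)$ for $r \ge h$ again lies in $\mathcal{A}_x(\mathcal{X})$: it is absolutely continuous because the two pieces match at $h$, and joining finitely many constant-speed pieces keeps $|\eta'|$ piecewise constant and $L[\eta]$ piecewise continuous; and $C_t[\eta] = \int_0^h L[\xi]\,\mathit{d}r + C_{t-h}[\bar\xi]$.

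For \eqref{e428}, fix $\xi \in \mathcal{A}_x(\mathcal{X})$ and $h \in [0,t)$. For every $\bar\xi \in \mathcal{A}_{\xi(h)}(\mathcal{X})$ the glued curve $\eta$ above is a competitor for $U(x,t)$, so $U(x,t) \le C_t[\eta] = \int_0^h L[\xi]\,\mathit{d}r + C_{t-h}[\bar\xi]$; passing to the infimum over $\bar\xi \in \mathcal{A}_{\xi(h)}(\mathcal{X})$ gives $U(x,t) \le \int_0^h L[\xi]\,\mathit{d}r + U(\xi(h),t-h)$, which is \eqref{e428}. For \eqref{e424}, fix $\varepsilon > 0$; since $U(x,t)$ is finite by Lemma \ref{t441}, choose $\xi \in \mathcal{A}_x(\mathcal{X})$ with $C_t[\xi] \le U(x,t) + \varepsilon$. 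Then for every $h \in [0,t)$, using the first identity and the fact that $\xi(\cdot+h) \in \mathcal{A}_{\xi(h)}(\mathcal{X})$ is a competitor for $U(\xi(h),t-h)$,
$$
U(x,t) + \varepsilon \ge C_t[\xi] = \int_0^h L[\xi]\,\mathit{d}r + C_{t-h}\big[\xi(\cdot+h)\big] \ge \int_0^h L[\xi]\,\mathit{d}r + U(\xi(h),t-h),
$$
i.e.\ \eqref{e424} holds with the single curve $\xi$ for all $h \in [0,t)$. Combining these two bounds with Propositions \ref{t427}, \ref{t428} and the remarks on $\mathcal{A}$ above, $U$ is a solution of \eqref{e411}, \eqref{e441}.

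I expect no serious obstacle inside this theorem: the substantive work — that the optimal cost over the restricted class $\mathcal{A}$ is bounded and arcwise continuous — has already been isolated into Proposition \ref{t444} and ultimately rests on the superlinearity \eqref{e4215} of $\ell$. The one thing to handle with care here is precisely that every curve obtained by cutting and gluing is kept inside $\mathcal{A}$ (finitely many pieces, constancy of the metric derivative on each, piecewise continuity of $L[\cdot]$), so that the curves used in the two arguments above are genuine competitors in the definition of $U$; once that is checked, the dynamic programming argument is essentially formal.
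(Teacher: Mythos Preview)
Your argument is correct and follows essentially the same dynamic-programming route as the paper's own proof: establish suboptimality by concatenating the given $\xi$ with a (near-)optimal competitor at $(\xi(h),t-h)$, establish superoptimality by time-shifting a near-optimal competitor at $(x,t)$, and then invoke Propositions~\ref{t427} and \ref{t428}. You are somewhat more explicit than the paper about the closure of $\mathcal{A}$ under time-shift and concatenation and about citing Proposition~\ref{t444} for arcwise continuity, but the substance is identical.
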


\begin{proof}[Proof]
First show that $U$ is a subsolution.
Fix $(x, t) \in \mathcal{Q}$ and $\xi \in \mathcal{A}_{x}(\mathcal{X})$.
Since there exists $\bar{\xi} \in \mathcal{A}_{x}(\mathcal{X})$ for $h \in [0, t]$ and $\varepsilon > 0$ such that
$$
U(\xi(h), t-h)
\geq \int_{0}^{t-h}L[\bar{\xi}]\mathit{d}r+u_{0}(\bar{\xi}(t-h))-\varepsilon,
$$
taking $\tilde{\xi}(r) = \xi(r)$ for $0 \leq r \leq h$, $\tilde{\xi}(r) = \bar{\xi}(r-h)$ for $r \geq h$ implies that
$$
\begin{aligned}
U(x, t)
&\leq \int_{0}^{t}L[\tilde{\xi}]\mathit{d}r+u_{0}(\tilde{\xi}(t)) \\
&= \int_{0}^{h}L[\xi]\mathit{d}r+\int_{0}^{t-h}L[\bar{\xi}]\mathit{d}r+u_{0}(\bar{\xi}(t-h)).
\end{aligned}
$$
Combining these two inequalities yields
$$
U(x, t) \leq \int_{0}^{h}L[\xi]\mathit{d}r+U(\xi(h), t-h)+\varepsilon.
$$
Since $\varepsilon$ is arbitrary, $U$ satisfies a suboptimality and hence $U$ is a subsolution by Proposition \ref{t427}.

To prove that $U$ is a supersolution, for $(x, t) \in \mathcal{Q}$ and $\varepsilon > 0$ take $\xi \in \mathcal{A}_{x}(\mathcal{X})$ such that
$$
U(x, t) \geq \int_{0}^{t}L[\xi]\mathit{d}r+u_{0}(\xi(t))-\varepsilon.
$$
Since $\tilde{\xi}(r) = \xi(r+h)$ belongs to $\mathcal{A}_{x}(\mathcal{X})$ for $h \in [0, t]$,
we have
$$
U(\xi(h), t-h) \leq \int_{h}^{t}L[\xi]\mathit{d}r+u_{0}(\xi(t)).
$$
Combining these two inequalities yields
$$
U(x, t) \geq \int_{0}^{h}L[\xi]\mathit{d}r+U(\xi(h), t-h)-\varepsilon.
$$
Therefore, $U$ satisfies a superoptimality and hence $U$ is a supersolution by Proposition \ref{t428}.

Since it is clear that $U$ satisfies \eqref{e4412} by definition,
we see that $U$ is a solution.
\end{proof}

\begin{remark}
This proof also shows that a dynamic programming principle is valid for the value function:
$$
U(x, t) = \inf_{\xi \in \mathcal{A}_{x}(\mathcal{X})}\left\{\int_{0}^{h}L[\xi]\mathit{d}r+U(\xi(h), t-h)\right\} \quad \text{for all $h \in [0, t]$.}
$$
This condition also indicates that the value function satisfies a semigroup property.
\end{remark}

We show a comparison theorem.

\begin{theorem}
\label{t448}
Assume (A1)--(A6).
Assume that arcwise continuous functions $u$ and $v$ on $\mathcal{X}\times[0, T)$ are a subsolution and a supersolution, respectively.
Then, the inequality
\begin{equation}
\label{e4413}
\sup_{\mathcal{Q}}(u-v) \leq \sup_{x \in \mathcal{X}}(u(x, 0)-v(x, 0))
\end{equation}
holds.
\end{theorem}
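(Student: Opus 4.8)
The plan is to obtain \eqref{e4413} directly from the two optimality principles, Proposition~\ref{t427} for the subsolution and Proposition~\ref{t428} for the supersolution, without ever returning to the equation. The crucial asymmetry that makes this work is that a subsolution satisfies the suboptimality inequality \eqref{e428} \emph{along every} curve $\xi \in \mathcal{A}_{x}(\mathcal{X})$, whereas a supersolution satisfies \eqref{e424} \emph{along one} such curve; hence one may feed the curve produced by the supersolution into the subsolution inequality, after which the cost integrals cancel.

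In detail: fix $(x, t) \in \mathcal{Q}$ and $\varepsilon > 0$. By Proposition~\ref{t428} applied to $v$, choose $\xi \in \mathcal{A}_{x}(\mathcal{X})$ with
$$
v(x, t) \geq \int_{0}^{h}L[\xi]\,\mathit{d}r + v(\xi(h), t-h) - \varepsilon
\quad \text{for all $h \in [0, t)$.}
$$
Since this very $\xi$ lies in $\mathcal{A}_{x}(\mathcal{X})$, Proposition~\ref{t427} applied to $u$ gives
$$
u(x, t) \leq \int_{0}^{h}L[\xi]\,\mathit{d}r + u(\xi(h), t-h)
\quad \text{for all $h \in [0, t)$.}
$$
Subtracting, the $L[\xi]$-integrals cancel and we are left with
$$
u(x, t) - v(x, t) \leq (u - v)(\xi(h), t - h) + \varepsilon
\quad \text{for all $h \in [0, t)$.}
$$

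It remains to let $h \uparrow t$ and bound the right-hand side by a quantity at the initial time. Extending $\xi$ by the constant value $x$ on $(-\infty, 0)$ we may view $\xi \in \mathit{AC}(\mathbf{R}, \mathcal{X})$, so that arcwise continuity of $u$ and $v$ on $\mathcal{X}\times[0, T)$ makes $h \mapsto (u - v)(\xi(h), t - h)$ continuous on the closed interval $[0, t]$. Letting $h \uparrow t$ therefore gives
$$
u(x, t) - v(x, t) \leq (u - v)(\xi(t), 0) + \varepsilon \leq \sup_{y \in \mathcal{X}}\bigl(u(y, 0) - v(y, 0)\bigr) + \varepsilon,
$$
and since $\varepsilon > 0$ and $(x, t) \in \mathcal{Q}$ are arbitrary, taking the supremum over $\mathcal{Q}$ yields \eqref{e4413}.

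Essentially all the substantive work has already been carried out in Propositions~\ref{t427} and~\ref{t428}, so the only genuinely delicate point I anticipate is this last passage to $t = 0$: it relies on $u$ and $v$ being arcwise continuous \emph{up to the initial time}, i.e.\ on $s \mapsto u(\xi(s), \tau)$ being continuous also at $\tau = 0$, which is precisely why the hypothesis demands arcwise continuity on $\mathcal{X}\times[0, T)$ rather than merely on $\mathcal{Q}$. Granting that, the argument reduces to the one-line cancellation displayed above.
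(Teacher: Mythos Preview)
Your proposal is correct and follows essentially the same route as the paper: use the superoptimality (Proposition~\ref{t428}) to select a curve $\xi$ for $v$, feed that same curve into the suboptimality (Proposition~\ref{t427}) for $u$, cancel the cost integrals, and pass to $h\to t$ via arcwise continuity up to $t=0$. The only cosmetic difference is that the paper takes the limit $h\to t$ separately in the two inequalities (invoking piecewise continuity of $L[\xi]$ for the integral) before subtracting, whereas you subtract first and then pass to the limit; your order is marginally cleaner since it sidesteps the need to control $\int_0^h L[\xi]\,\mathit{d}r$ as $h\to t$.
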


\begin{proof}[Proof]
Fix $(x, t) \in \mathcal{Q}$, $\varepsilon > 0$ and $\xi \in \mathcal{A}_{x}(\mathcal{X})$ such that
$$
v(x, t) \geq \int_{0}^{h}L[\xi]\mathit{d}r+v(\xi(h))-\varepsilon
\quad \text{for all $h \in [0, t]$.}
$$
Note that $L[\xi]$ is piecewise continuous on $[0, t]$.
Since $(s, t) \mapsto v(\xi(s), t)$ is continuous on $[0, t]\times[0, T)$ by the arcwise continuity of $v$,
letting $h \to t$ we have
$$
v(x, t) \geq \int_{0}^{t}L[\xi]\mathit{d}r+v(\xi(t), 0)-\varepsilon
$$
We also see that
$$
u(x, t) \leq \int_{0}^{h}L[\xi]\mathit{d}r+u(\xi(h), t-h)
\quad \text{for all $h \in [0, t)$,}
$$
which implies
$$
u(x, t) \leq \int_{0}^{t}L[\xi]\mathit{d}r+u(\xi(t), 0).
$$
Combining the inequalities, we obtain
$$
\begin{aligned}
u(x, t) - v(x, t)
&\leq u(\xi(t), 0)-v(\xi(t), 0)+\varepsilon \\
&\leq \sup_{x \in \mathcal{X}}(u(x, 0)-v(x, 0))+\varepsilon.
\end{aligned}
$$
Since $\varepsilon$ is arbitrary, the proof is complete.
\end{proof}

\begin{proof}[Proof of Theorem \ref{t442}]
Proposition \ref{t444} and Theorem \ref{t446} imply that the value function $U$ is a solution.
If $u$ is another solution, Theorem \ref{t448} implies that $u \leq U$ and $U \leq u$ in $\mathcal{Q}$ since $u|_{t = 0} = U|_{t = 0} = u_{0}$, and hence we see that $u = U$.
\end{proof}

\section*{Acknowledgments}
The author thank Yoshikazu~Giga for his constructive comments and continuous encouragement.
The work of the author was supported by a Grant-in-Aid for JSPS Fellows No.\ 25-7077 and the Program for Leading Graduate Schools, MEXT,  Japan.


\providecommand{\bysame}{\leavevmode\hbox to3em{\hrulefill}\thinspace}
\providecommand{\MR}{\relax\ifhmode\unskip\space\fi MR }
\providecommand{\MRhref}[2]{%
  \href{http://www.ams.org/mathscinet-getitem?mr=#1}{#2}
}
\providecommand{\href}[2]{#2}

\end{document}